\newtheorem{theorem}{Theorem}[section]
\newtheorem{lemma}[theorem]{Lemma}
\theoremstyle{definition}
\newtheorem{proposition}[theorem]{Proposition}
\newtheorem{corollary}[theorem]{Corollary}
\newtheorem{remark}[theorem]{Remark}
\theoremstyle{remark}
\newcommand{\be}{\begin{equation}}
\newcommand{\ee}{\end{equation}}
\numberwithin{equation}{section}
\begin{document}
\title{The Alexandrov-Fenchel type inequalities, revisited}
\author{Ping Li}
\address{School of Mathematical Sciences, Fudan University, Shanghai 200433, China}
\email{pingli@tongji.edu.cn\\
pinglimath@gmail.com}
\thanks{The author was partially supported by the National
Natural Science Foundation of China (Grant No. 11722109).}

 \subjclass[2010]{32Q15, 52A39, 15A45.}


\keywords{Alexandrov-Fenchel type inequality, Brunn-Minkowski inequality, mixed volume, mixed discriminant, Hodge index theorem, mixed Hodge index theorem, mixed Hodge-Riemann bilinear relation, K\"{a}hler class, nef class, big class.}

\begin{abstract}
Various Alexandrov-Fenchel type inequalities have appeared and played important roles in convex geometry, matrix theory and complex algebraic geometry. It has been noticed for some time that they share some striking analogies and have intimate relationships. The purpose of this article is to shed new light on this by comparatively investigating them in several aspects. \emph{The principal result} in this article is a complete solution to the equality characterization problem of various Alexandrov-Fenchel type inequalities for intersection numbers of nef and big classes on compact K\"{a}hler manifolds, extending some earlier related results. In addition to this central result, we also give a geometric proof of the complex version of the Alexandrov-Fenchel inequality for mixed discriminants and a determinantal generalization of various Alexandrov-Fenchel type inequalities.
\end{abstract}

\maketitle

\section{Introduction}\label{introduction}
One of the most fundamental results in convex geometry is the Alexandrov-Fenchel (AF for short) inequality for mixed volumes on convex bodies in the Euclidean spaces. There are at least three different proofs to this classical result. The original one is due to Alexandrov and Fenchel independently around 1936 (\cite{Al1}, \cite{Fe}). Soon afterwards Alexandrov introduced the notion of mixed discriminants for matrices and took it up as a tool to derive his second proof of this inequality (\cite{Al2}). Around the year 1979 Khovanskii and Teissier discovered independently a profound link between the theory of mixed volumes and algebraic geometry (\cite{Kh}, \cite{Te1}), which leads to a third proof of the AF inequality using the Hodge index theorem in algebraic geometry (cf. \cite[\S 27]{BZ}). Since then, many efforts are devoted to exploring deeper relationships among them and have produced fruitful results. In particular, along this line several kinds of AF type inequalities and related results were discovered. We refer to \cite{Te2}, \cite{Te3}, \cite{Gr}, \cite{Ti}, \cite{BFJ}, \cite{LX1}, \cite{LX2}, \cite{Xi}, \cite{DX}, \cite{RT}, and the related references therein. Some log-concave type sequences naturally arise from AF type inequalities and in K\"{a}hler geometry the latter is a direct consequence of the mixed Hodge-Riemann relation (\cite{DN}, \cite{DN2}, \cite{Ca}). This inspired Huh and his coauthors to develop Hodge theory for purely combinatorial objects and resolve several long-standing conjectures related to log-concavity in them. We refer the reader to Huh's ICM Lecture \cite{Huh} for a survey of these remarkable results.

\emph{The main purpose} of this article is to comparatively investigate three kinds of AF type inequalities in several aspects. We have three main results as well as some consequences and in what follows we shall briefly describe them.

\emph{The first result} is to apply an AF type inequality on compact K\"{a}hler manifolds established by the author in \cite{Li2} to obtain a complex Hermitian version of the AF type inequality for mixed discriminants, Theorem \ref{AF complex mixed discriminant}. The reason for this is two-fold. On the one hand, we apply an AF type inequality of algebro-geometric nature to yield a different proof of that of a purely combinatorial nature, revealing the intimate relationships between them and thus fitting into the theme of our article very well. On the other hand and more importantly, although the real version of this inequality, which is due to Alexandrov (\cite{Al2}), has been well-known for a long time and have several different proofs up to now (\cite[\S 5.5]{Sc}, \cite[\S 3]{Le}), it seems that the complex version of this inequality and/or its detailed proof never appeared in existing literature, at least to the author's best knowledge. Even if it should be known to be true to some experts in inequalities for matrices, for instance, Alexandrov himself gave a very short remark about the validity of this inequality for complex Hermitian matrices at the bottom of second page in his original paper \cite{Al2}, which was pointed out to the author by R.B. Bapat, it deserves to be circulated by presenting an explicit and detailed proof.

In \cite{Sh} Shephard generalized the AF inequality for mixed volumes to a determinantal case. Careful investigation shall find that Shephard's application of the AF inequality for mixed volumes is \emph{formal} and indeed is valid for abstract bilinear functions satisfying such inequalities. Inspired by this observation, our \emph{second result}, Theorem \ref{determinantal}, gives a determinantal type inequality for general bilinear functions satisfying AF type inequalities on closed cones of real Euclidean spaces. Applying this abstract result to mixed discriminants on matrices and K\"{a}hler/nef classes on compact K\"{a}hler manifolds yields respectively Corollaries \ref{coro1} and \ref{coro2}, which similarly generalize the original AF type inequalities to determinantal cases. Theorem \ref{determinantal} and Corollaries \ref{coro1} and \ref{coro2} depend on a positive integer $r$. The case of $r=1$ reduces to the original AF type inequality and \emph{the case of $r=2$ shall play a key role} in our proof of the central result Theorem \ref{equality case1}.

One unsolved problem involved in the AF inequality for mixed volumes is to completely characterize its equality case (\cite[\S 7.6]{Sc}). A similar characterization problem can also be posed to the equality case of the AF type inequality of the intersection numbers of nef classes on projective or compact K\"{a}hler manifolds. This problem was first proposed and studied by Teissier for the Khovanskii-Teissier inequalities where only two nef classes are involved (\cite{Te3}) and so in some literature this problem is referred to as \emph{Teissier's problem}. Teissier's problem was recently solved for a pair of nef and big classes by Bouchsom, Favre and Jonsson in the context of projective manifolds (\cite{BFJ}) and by Fu and Xiao for general compact K\"{a}hler manifolds (\cite{FX2}). Another extremal case was also recently solved by Lehmann and Xiao (\cite{LX1}). \emph{The third result}, which is also \emph{our central result} in this article, Theorems \ref{equality case1} and \ref{equality case2}, completely settle the characterization problem of the equality cases of the general AF type inequalities for nef and big classes on compact K\"{a}hler manifolds.

\emph{The rest of this article is organized as follows}. In Section \ref{Preliminaries and background materials} we shall recall three AF type inequalities for mixed volumes, mixed discriminants and intersection numbers of K\"{a}hler/nef classes on compact K\"{a}hler manifolds respectively, as well as set up some notation and symbols for our later purpose. We will state our three main results, Theorems \ref{AF complex mixed discriminant}, \ref{determinantal} and \ref{equality case1}, as well as their consequences in Section \ref{main results}. Then Sections \ref{Proof of AF complex mixed discriminant}, \ref{Proof of determinantal} and \ref{Proof of equality} are devoted to respectively the proofs of these three main results. It turns out that various original AF type inequalities can be viewed as cases of level two and can be repeatedly applied via a unified induction argument to be extended to the cases of any level \big((\ref{AF mixed volume})$\Rightarrow$(\ref{AF mixed volume for m}), (\ref{AF kahler class})$\Rightarrow$(\ref{AF kahler class for m}), (\ref{AF complex mixed discriminant})$\Rightarrow$(\ref{AF complex mixed formula for m}),  Theorem \ref{equality case1}$\Rightarrow$Theorem \ref{equality case2}\big). This induction argument should be classical and well-known to related experts, but we cannot find a very detailed and clean argument in the literature and thus, for the reader's convenience, the last section, Section \ref{last section} entitled with ``Appendix'', is included to illustrate this induction argument.

\section*{Acknowledgements}
The author would like to express his sincere thanks to Professors R.B. Bapat, L. Gurvits, and R. Schneider for their useful comments on the materials of mixed discriminants, and Dr. Jian Xiao for his many useful comments on the materials related to Theorems \ref{equality case1} and \ref{equality case2}. Special thanks go to Professor R.B. Bapat who pointed out to the author the short remark on the validity of the complex version of the AF type inequality for mixed discriminants given by Alexandrov in \cite{Al2}.

\section{Preliminaries and background materials}\label{Preliminaries and background materials}
We review in this section the AF type inequalities respectively for mixed volumes, mixed discriminants and intersection numbers of K\"{a}hler/nef classes on compact K\"{a}hler manifolds, and along this line set up some necessary notation and symbols used in later sections.

Let $\mathcal{K}^n$ be the set consisting of all non-empty compact convex subsets in $\mathbb{R}^n$. For each $K\in\mathcal{K}^n$, denote by $\text{V}(K)$ its $n$-dimensional volume in $\mathbb{R}^n$, which is positive when $K$ has a non-empty interior. For  $K_1,\ldots,K_n\in\mathcal{K}^n$, their \emph{mixed volume}, denoted by $V(K_1,\ldots,K_n)$, can be directly defined via the polarization formula:
$$V(K_1,\ldots,K_n):=\frac{1}{n!}\sum_{(\epsilon_1,\ldots,\epsilon_n)
\in\{0,1\}^n}(-1)^{n+\sum_{i=1}^n\epsilon_i}
\cdot V(\sum_{i=1}^n\epsilon_iK_i),$$
from which it is clear that the mixed volume $V(\cdot,\ldots,\cdot)$ is symmetric in its arguments.

It turns out that the mixed volume is nonnegative and satisfies the following well-known fact due to Minkowski:
\be\label{expansionmixedvolume}V(\sum_{i=1}^m\lambda_iK_i)=
\sum_{\mbox{\tiny$\begin{array}{c}
i_1+\cdots+i_m=n\\
0\le i_1,\ldots,i_m \le n\end{array}$}}\frac{n!}{i_1!\cdots i_m!}V(K_{1}[i_1],\ldots,K_{m}[i_m])
\lambda_1^{i_1}\cdots\lambda_m^{i_m},\qquad(\lambda_i\geq0),\ee
where the compact convex subset $\sum_{i=1}^m\lambda_iK_i$ stands for the Minkowski sum $$\sum_{i=1}^m\lambda_iK_i:=\{\sum_{i=1}^m\lambda_ik_i~|~k_i\in K_i\},$$
 $m$ may be different from $n$ in general, and the following notation is adopted, which shall be frequently used in the sequel:
\be\label{simplenotation}V(K_{1}[i_1],\ldots,K_{m}[i_m])
:=V(\underbrace{K_1,\ldots,K_1}_{i_1},\ldots,\underbrace{K_m,
\ldots,K_m}_{i_m}).\ee
In other words, $n!\cdot V(K_1,\ldots,K_n)$ is the coefficient in front of $\lambda_1\cdots \lambda_n$ in the polynomial $V(\sum_{i=1}^n\lambda_iK_i)$ of $\lambda_1,\ldots,\lambda_n$.
More basic properties and discussions related to mixed volumes can be found in \cite[\S5]{Sc}.

One of the most fundamental results in convex geometry is the following Alexandrov-Fenchel inequality for mixed volumes, which was discovered independently by Alexandrov and Fenchel around in 1936 (\cite{Al1}, \cite{Fe}, cf. \cite[\S 7.3]{Sc}):
\begin{theorem}[AF inequality for mixed volumes]
Suppose that $K_1,\ldots,K_n\in\mathcal{K}^n$. Then we have
\be\label{AF mixed volume}V(K_1,K_2,K_3,\ldots,K_n)^2\geq V(K_1,K_1,K_3,\ldots,K_n)\cdot V(K_2,K_2,K_3,\ldots,K_n).\ee
\end{theorem}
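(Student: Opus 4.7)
The plan is to follow the algebro-geometric route of Khovanskii and Teissier, reducing the inequality to the Hodge index theorem on algebraic surfaces; this route dovetails naturally with the K\"{a}hler-geometric framework the paper develops in later sections.

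First, by the continuity of $V(\cdot,\ldots,\cdot)$ in each argument with respect to the Hausdorff metric, together with the density of rational polytopes in $\mathcal{K}^n$, I would reduce to the case where $K_1,\ldots,K_n$ are lattice polytopes. Refining their common normal fan to a smooth one (via toric resolution of singularities) produces a smooth projective toric $n$-fold $X$ and nef $\mathbb{Q}$-divisors $D_1,\ldots,D_n$ on $X$ satisfying the dictionary $n!\cdot V(K_{i_1},\ldots,K_{i_n})=D_{i_1}\cdots D_{i_n}$. Under this translation, (\ref{AF mixed volume}) becomes the inequality
$$(D_1\cdot D_2\cdot D_3\cdots D_n)^2\geq (D_1^2\cdot D_3\cdots D_n)\cdot(D_2^2\cdot D_3\cdots D_n)$$
for nef divisors $D_i$ on $X$.

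Next, I would induct on $n$, peeling off the auxiliary class $D_n$. The idea is to perturb $D_n$ to an ample class $D_n+\varepsilon\omega$ (with $\omega$ an ample class and $\varepsilon>0$ small), pick a smooth very ample representative $Y\in|m(D_n+\varepsilon\omega)|$ via Bertini, restrict $D_1,\ldots,D_{n-1}$ to $Y$, and invoke the inductive hypothesis on the $(n-1)$-fold $Y$; letting $\varepsilon\downarrow 0$ and using continuity of intersection numbers on the nef cone completes the inductive step. The base case $n=2$ asserts $(D_1\cdot D_2)_S^2\geq(D_1^2)_S\cdot(D_2^2)_S$ for nef $D_1,D_2$ on an algebraic surface $S$, and this is exactly the reverse Cauchy-Schwarz form of the Hodge index theorem: the intersection form on $H^{1,1}(S,\mathbb{R})$ has signature $(1,h^{1,1}(S)-1)$, so the two-plane spanned by any two nef classes cannot be negative definite.

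The main obstacle I anticipate is the boundary behavior of nef classes. When $D_n$ is merely nef, the perturb-then-Bertini-then-limit step must be justified carefully, and on the cut surface $Y$ the restrictions of $D_1,\ldots,D_{n-1}$ may themselves lie on the boundary of the nef cone of $Y$, forcing a further limiting argument. The same phenomenon occurs in the base case: if $(D_1^2)_S=0$ or $(D_2^2)_S=0$, the inequality is saturated and the Hodge-index bound must be extended from the open positive cone to its closure by continuity. Each of these is standard in positivity theory but has to be tracked throughout the induction.
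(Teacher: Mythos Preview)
The paper does not prove this theorem at all: it is stated as a classical result with references to Alexandrov, Fenchel and Schneider's monograph, and the accompanying remark simply directs the reader to \cite[p.~398]{Sc} for historical commentary on the proof. In the Introduction the paper explicitly lists three existing proofs --- Alexandrov--Fenchel's original argument, Alexandrov's mixed-discriminant proof, and the Khovanskii--Teissier algebro-geometric proof via the Hodge index theorem --- and your proposal is exactly the third of these. So you are not reproducing the paper's proof (there is none), but supplying one of the known proofs that the paper cites.

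Your outline is sound. The reduction to lattice polytopes by continuity, the toric dictionary $n!\,V(K_{i_1},\ldots,K_{i_n})=D_{i_1}\cdots D_{i_n}$, the inductive Bertini step to pass from an $n$-fold to an $(n-1)$-fold by cutting with a very ample divisor, and the base case via the Hodge index theorem on surfaces are all standard and correct; this is essentially the argument recorded in \cite[\S 27]{BZ} and \cite[\S 1.6]{La}. The boundary issues you flag (perturbing nef to ample, limiting $\varepsilon\to 0$, and handling $D_i^2=0$ on the surface) are real but routine, and you have identified them correctly. One small point: in the base case, the phrase ``the two-plane spanned by any two nef classes cannot be negative definite'' is slightly imprecise when the classes are proportional or one has zero self-intersection; the cleanest formulation is to first assume $D_1^2>0$ (perturbing if necessary), write $D_2=\lambda D_1+E$ with $D_1\cdot E=0$, and use that $E^2\leq 0$ by Hodge index.
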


\begin{remark}
\begin{enumerate}
\item
We refer the interested reader to \cite[p. 398]{Sc} about the historical remarks on the proof of this inequality.

\item
The equality case in (\ref{AF mixed volume}) clearly holds if $K_1$ and $K_2$ are homothetic: $K_1=\lambda K_2+t$ with $t\in\mathbb{R}^n$ and $\lambda>0$.
However, this is \emph{not} the only possibility for equality and the complete characterization of the equality cases is an unsolved problem and only partial results are known (cf. \cite[\S 7.6]{Sc}).
\end{enumerate}
\end{remark}

Many important results, including those series of geometric inequalities of isoperimetric type, turn out to be special cases of (\ref{AF mixed volume}) (\cite[\S 20.2]{BZ}). Here we only mention two direct improvement/consequence for our later purpose.

Since the mixed volume $V(\cdot,\ldots,\cdot)$ is symmetric in its arguments, repeated applications of (\ref{AF mixed volume}) yield a more general inequality (\cite[\S 7.4]{Sc})
\be\label{AF mixed volume for m}
V(K_1,\ldots,K_n)^m\geq\prod_{i=1}^m V(K_i[m],K_{m+1}\ldots,K_n),\qquad\forall~2\leq m\leq n,
\ee
and the idea of this proof can be found in Section \ref{last section}.

Assume $2\leq m\leq n$ and $K_0$, $K_1,$ $K_{m+1},$ $\ldots$, $K_n\in\mathcal{K}^n$, and define
\be\label{f(lambda)}f(\lambda):=V\Big(\big((1-\lambda)K_0+\lambda K_1\big)[m],K_{m+1},\ldots,K_n\Big)^{\frac{1}{m}},\qquad0\leq\lambda\leq 1.\ee

Directly showing that $f''(\lambda)\leq 0$ via (\ref{AF mixed volume}), another direct consequence is the following general Brunn-Minkowski (BM for short) theorem (\cite[p. 406]{Sc}).
\begin{corollary}[General BM theorem for mixed volumes]\label{AF mixed volume for BM}
The function $f(\lambda)$ defined by  (\ref{f(lambda)}) is concave on $[0,1]:$
\be\label{Brunn-Minkowski}
\begin{split}
&V\Big(\big((1-\lambda)K_0+\lambda K_1\big)[m],K_{m+1},\ldots,K_n\Big)^{\frac{1}{m}}\\
 \geq&(1-\lambda)V(K_{0}[m],K_{m+1},\ldots,K_n)^{\frac{1}{m}} +\lambda V(K_{1}[m],K_{m+1},\ldots,K_n)^{\frac{1}{m}},\qquad 0\leq\lambda\leq 1.\end{split}\ee
Furthermore, if the equality case in (\ref{Brunn-Minkowski}) holds for \emph{some} $\lambda\in(0,1)$, it must hold for \emph{all} $\lambda\in[0,1]$:
$$f(\lambda)\equiv(1-\lambda)f(0)+\lambda f(1),\qquad\forall~\lambda\in[0,1].$$
\end{corollary}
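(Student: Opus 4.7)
The plan is to first prove the endpoint form $f(\lambda)\ge(1-\lambda)f(0)+\lambda f(1)$ by polynomially expanding $f(\lambda)^m$ through the multilinearity of mixed volumes and then bounding each coefficient via the iterated AF inequality (\ref{AF mixed volume for m}); the passage from there to concavity on all of $[0,1]$ is a one-line reparametrization. Writing $L:=(K_{m+1},\ldots,K_n)$ and $K_\lambda:=(1-\lambda)K_0+\lambda K_1$, Minkowski's expansion (\ref{expansionmixedvolume}) together with the symmetry of mixed volume gives
\[
f(\lambda)^m \;=\; V\bigl(K_\lambda[m],\,L\bigr) \;=\; \sum_{i=0}^{m}\binom{m}{i}(1-\lambda)^{m-i}\lambda^{i}\, V\bigl(K_0[m-i],\,K_1[i],\,L\bigr).
\]
Applying (\ref{AF mixed volume for m}) to the $n$-tuple consisting of $m-i$ copies of $K_0$, $i$ copies of $K_1$, and the fixed $K_{m+1},\ldots,K_n$ yields the coefficient-wise bound
\[
V\bigl(K_0[m-i],\,K_1[i],\,L\bigr)\;\ge\; V(K_0[m],L)^{\frac{m-i}{m}}V(K_1[m],L)^{\frac{i}{m}} \;=\; f(0)^{m-i}\,f(1)^{i}.
\]

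Substituting these into the expansion and collapsing the sum by the binomial theorem yields
\[
f(\lambda)^m \;\ge\; \sum_{i=0}^m \binom{m}{i}(1-\lambda)^{m-i}\lambda^{i}\,f(0)^{m-i}f(1)^{i} \;=\; \bigl((1-\lambda)f(0)+\lambda f(1)\bigr)^{m},
\]
and taking $m$-th roots gives (\ref{Brunn-Minkowski}) for the specified $K_0,K_1$. To upgrade this endpoint statement to concavity on $[0,1]$, I observe that $(1-t)K_{\lambda_0}+tK_{\lambda_1}=K_{(1-t)\lambda_0+t\lambda_1}$ for any $\lambda_0,\lambda_1,t\in[0,1]$; re-running the preceding argument with $K_{\lambda_0}$ and $K_{\lambda_1}$ in place of $K_0$ and $K_1$ then gives $f((1-t)\lambda_0+t\lambda_1)\ge (1-t)f(\lambda_0)+t f(\lambda_1)$, which is the required concavity.

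For the equality assertion I set $g(\lambda):=f(\lambda)-(1-\lambda)f(0)-\lambda f(1)$; by what has just been shown $g$ is concave on $[0,1]$, non-negative, and vanishes at $\lambda=0,1$. Assuming also $g(\lambda_0)=0$ for some $\lambda_0\in(0,1)$, for any $\lambda\in(\lambda_0,1)$ I write $\lambda_0=(1-\lambda_0/\lambda)\cdot 0+(\lambda_0/\lambda)\cdot\lambda$ and use concavity of $g$ to obtain $0=g(\lambda_0)\ge(\lambda_0/\lambda)\,g(\lambda)\ge 0$, forcing $g(\lambda)=0$; a mirror-image argument (writing $\lambda_0$ as a convex combination of $\lambda$ and $1$) handles $\lambda\in(0,\lambda_0)$, and hence $f$ equals its chord throughout $[0,1]$. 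The only genuinely substantive step in the whole proof is the coefficient-wise bound drawn from (\ref{AF mixed volume for m}); but once (\ref{AF mixed volume for m}) is granted, that step is essentially tautological, so the main obstacle is really already absorbed into the induction by which (\ref{AF mixed volume for m}) itself is derived from the basic AF inequality (\ref{AF mixed volume}) --- an induction deferred to the appendix.
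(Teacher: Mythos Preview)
Your proof is correct. The paper does not spell out a full argument but indicates the standard route (citing \cite[p.~406]{Sc}): compute $f''(\lambda)$ and show it is nonpositive using only the basic two-term AF inequality (\ref{AF mixed volume}), which yields concavity on $[0,1]$ immediately. Your argument instead invokes the iterated inequality (\ref{AF mixed volume for m}) to bound each coefficient in the Minkowski expansion of $f(\lambda)^m$, collapses the sum via the binomial theorem to obtain the endpoint chord inequality, and then upgrades to full concavity by the reparametrization $(1-t)K_{\lambda_0}+tK_{\lambda_1}=K_{(1-t)\lambda_0+t\lambda_1}$. Your route is differentiation-free and makes the dependence on (\ref{AF mixed volume for m}) explicit, at the price of importing that iterated inequality (whose own proof is an induction on (\ref{AF mixed volume}), deferred to the Appendix); the paper's $f''\le 0$ route uses (\ref{AF mixed volume}) directly and avoids the reparametrization detour. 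The equality clause is handled the same way in both approaches: once concavity is known, a concave function that meets its chord at one interior point must coincide with it on the whole interval.
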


We now turn to the notion and inequalities of mixed discriminants for matrices. Mixed discriminants were introduced and investigated by Alexandrov as a tool to derive his second proof of the AF inequality (\ref{AF mixed volume}). Let $$A_r=\big(a^{(r)}_{ij}\big)^n_{i,j=1},\qquad r=1,\ldots,n,$$ be $n$ real or complex valued $n\times n$ matrices, which are parametrized by $r$ and, for each $r$, whose entries are parametrized by $i$ and $j$. The \emph{mixed discriminant} of $A_1,\ldots,A_n$, denoted by $D(A_1,\ldots,A_n)$, is defined to be the polarization of the determinant function:
$$D(A_1,\ldots,A_n):=\frac{1}{n!}\sum_{\sigma\in S_n}\begin{vmatrix}
a^{(\sigma(1))}_{11}&\cdots&a^{(\sigma(n))}_{1n}\\
\vdots&\ddots&\vdots\\
a^{(\sigma(1))}_{n1}&\cdots&a^{(\sigma(n))}_{nn}
\end{vmatrix},$$
where $S_n$ is the group of all permutations of the set $\{1,\ldots,n\}$.

$D(\cdot,\ldots,\cdot)$ is symmetric in its arguments, $D(A,\ldots,A)=\det(A)$, and satisfies
$$\det(\sum_{r=1}^m\lambda_iA_r)=\sum_{\mbox{\tiny$\begin{array}{c}
r_1+\cdots+r_m=n\\
0\le r_1,\ldots,r_m \le n\end{array}$}}\frac{n!}{r_1!\cdots r_m!}D(A_1[r_1],\ldots,A_m[r_m])\lambda_1^{r_1}\cdots\lambda_m^{r_m},$$
an analogy to the identity (\ref{expansionmixedvolume}), i.e., $n!D(A_1,\ldots,A_n)$ is the coefficient in front of $\lambda_1\cdots\lambda_n$ in the polynomial $\det(\sum_{i=1}^n\lambda_iA_i)$ of $\lambda_1,\ldots,\lambda_n$. Here we adopt a similar notation introduced in (\ref{simplenotation}).

If $A$ is a real symmetric positive definite (resp. positive semi-definite) matrix, we write $A>0$ (resp. $A\geq0$). The following AF type inequality was proved by Alexandrov in 1938 (\cite{Al2}, \cite[p. 1062]{Le}, \cite[p. 327]{Sc}), which turns out to be a special case of a general AF inequality for hyperbolic polynomials established by G{\aa}rding (\cite{Ga}, \cite[\S5.5]{Sc}).

\begin{theorem}[AF inequality for mixed discriminants]
Let $A$, $B$, $A_3,\ldots,A_n$ be $n$ real symmetric $n\times n$ matrices, where $A, A_3,\ldots,A_n>0$ and $B$ is arbitrary. Then
\be\label{AF real mixed discriminant}
D(A,B,A_3,\ldots,A_n)^2\geq D(A,A,A_3,\ldots,A_n)\cdot D(B,B,A_3,\ldots,A_n),\ee
with equality if and only if $A$ and $B$ are proportional: $B=\lambda A$ for some real number $\lambda$.
\end{theorem}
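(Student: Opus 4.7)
The plan is to view the inequality (\ref{AF real mixed discriminant}) as a reverse Cauchy--Schwarz statement for the symmetric bilinear form $b(X, Y) := D(X, Y, A_3, \ldots, A_n)$ on the space $\mathrm{Sym}_n(\mathbb{R})$. Concretely, with $A_3, \ldots, A_n > 0$ fixed, I want to show that the associated quadratic form $Q(X) := b(X, X)$ has Lorentzian signature $(1, N-1)$ where $N = \binom{n+1}{2}$, which, together with $Q(A) > 0$ for any $A > 0$, automatically forces $b(A, B)^2 \geq Q(A) Q(B)$ for every symmetric $B$ by elementary linear algebra. The natural framework is G{\aa}rding's theory of hyperbolic polynomials: $\det$ is a homogeneous polynomial of degree $n$ on $\mathrm{Sym}_n(\mathbb{R})$, and by the spectral theorem $t \mapsto \det(tI - X)$ has only real roots for every symmetric $X$, so $\det$ is hyperbolic with respect to $I$. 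The hyperbolicity cone --- the connected component of $I$ in $\{\det > 0\}$ --- is exactly the positive definite cone, so $A, A_3, \ldots, A_n$ all sit in it.

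G{\aa}rding's inequality (\cite{Ga}) then yields (\ref{AF real mixed discriminant}) directly: for any hyperbolic polynomial $p$ of degree $n$ with hyperbolicity cone $\Gamma$, its fully polarized form $M_p$ satisfies
$$M_p(v_1, v_2, v_3, \ldots, v_n)^2 \;\geq\; M_p(v_1, v_1, v_3, \ldots, v_n) \cdot M_p(v_2, v_2, v_3, \ldots, v_n)$$
whenever $v_1, v_3, \ldots, v_n \in \Gamma$ and $v_2$ is arbitrary (possibly rendering the right-hand side non-positive). Specializing to $p = \det$ with $v_1 = A$, $v_2 = B$, and $v_i = A_i$ for $i \geq 3$ produces the inequality. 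For a self-contained argument, G{\aa}rding's proof proceeds by induction on $n$, exploiting that the directional derivative $\partial_{A_n} \det$ is hyperbolic of degree $n - 1$ with hyperbolicity cone \emph{containing} the positive definite cone, so one can shave off one dimension at a time. The base case $n = 2$ is a direct calculation: $Q(X) = x_{11} x_{22} - x_{12}^2$ manifestly has signature $(1, 2)$.

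The equality characterization is the step I expect to be the main obstacle. In the abstract non-degenerate Lorentzian setting, equality in reverse Cauchy--Schwarz forces $B \in \mathbb{R} A$, so the task reduces to proving that the bilinear form $b$ has trivial radical whenever $A_3, \ldots, A_n > 0$. I would establish this by an induction parallel to the inequality: suppose $C \in \mathrm{Sym}_n$ satisfies $D(C, Y, A_3, \ldots, A_n) = 0$ for every symmetric $Y$. Testing against $Y = A_3, \ldots, A_n, A$ and against rank-one perturbations $Y = vv^T$ should produce enough independent linear conditions --- strict positivity of $A_3, \ldots, A_n$ being precisely what guarantees this --- to force $C = 0$. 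With the radical trivial, the equality case of G{\aa}rding's inequality collapses to $B = \lambda A$, completing the proof.
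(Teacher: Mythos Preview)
Your G{\aa}rding-based approach is correct, and in fact the paper itself points to exactly this route: immediately after stating the theorem it remarks that the real inequality is a special case of G{\aa}rding's general result for hyperbolic polynomials \cite{Ga}. The paper does \emph{not} supply its own proof of the real statement---it is presented as classical background due to Alexandrov. What the paper does prove is the \emph{complex Hermitian} extension (Theorem~\ref{AF complex mixed discriminant}, Section~\ref{Proof of AF complex mixed discriminant}), and by a genuinely different, geometric method: each Hermitian $n\times n$ matrix $A$ is identified with the constant real $(1,1)$-form $\alpha(A)=\sqrt{-1}\sum a_{ij}\,dz^i\wedge d\bar z^j$ on $\mathbb{C}^n$, which descends to the torus $T=\mathbb{C}^n/(\mathbb{Z}+\sqrt{-1}\mathbb{Z})^n$; the mixed discriminant then coincides (up to a universal constant) with the intersection number $[\alpha(A_1)]\cdots[\alpha(A_n)]$ on $T$, and one invokes the Alexandrov--Fenchel inequality for K\"{a}hler classes (Proposition~\ref{AF kahler/nef}). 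The equality characterization comes from the K\"{a}hler equality case plus the $\partial\bar\partial$-lemma and a Laplacian argument on the compact torus, which upgrades $[\alpha(B)]=\lambda[\alpha(A)]$ to $B=\lambda A$. So your argument is the purely algebraic/analytic one the paper cites, while the paper's own contribution is an algebro-geometric proof that handles the complex Hermitian case uniformly and makes the link to K\"{a}hler geometry explicit. One caveat: your sketch of the non-degeneracy of $b(X,Y)=D(X,Y,A_3,\ldots,A_n)$ (``testing against rank-one $vv^T$ should produce enough conditions'') is plausible but not carried out; you would need either to execute that computation or to invoke the completeness clause in G{\aa}rding's theorem to make the equality case airtight.
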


\begin{remark}
\begin{enumerate}
\item
Except the application by Alexandrov himself to yield a second proof of his inequality (\ref{AF mixed volume}), inequality (\ref{AF real mixed discriminant}) was overlooked for a long time until Egorychev used it to give a proof of the Van der Waerden's conjecture on the minimum of the permanent of a doubly stochastic matrices (\cite{Eg}). Since then, new interests in the inequalities related to the mixed discriminants arose and we refer the reader to \cite{Pa}, \cite{Ba} and \cite{Gu} and the references therein.

\item
By continuity, inequality (\ref{AF real mixed discriminant}) remains true if we only assume that the matrices $A,$ $A_3,$ $\ldots,$ $A_n$ are nonnegative definite. However, unlike positive definiteness case, in this situation it is difficult to characterize the equality case. Moreover, similar to the direct applications of (\ref{AF mixed volume}) to yielding (\ref{AF mixed volume for m}) and (\ref{Brunn-Minkowski}), we also have similar such results for mixed discriminants.
\end{enumerate}
\end{remark}

Around in 1979 Khovanskii and Teissier (\cite{Kh}, \cite{Te1}) independently discovered AF type inequalities for intersection numbers of nef divisors on projective manifolds based on the usual Hodge index theorem, which leads to an algebraic proof of (\ref{AF mixed volume}) (cf. \cite[\S 27]{BZ}) and thus establishes an intimate relationship between the theory of mixed volumes and algebraic geometry. These inequalities were reproved along similar lines by Beltrametti-Biancofiore-Sommese, who applied them to the study of projective manifolds of log-general type (\cite[p. 832]{BBS}, also cf. \cite[\S 1.6]{La}). Later Demailly  extended these results to compact K\"{a}hler manifolds (\cite[\S 5]{De}). The proof of Khovanskii-Teissier inequalities
is to apply the usual Hodge index theorem to the ample divisors, together with induction and continuity arguments. The approach also suggests that the usual
Hodge index theorem may be extended to the mixed case.
After some partial results towards this direction
(\cite{Gr}, \cite{Ti}), the mixed version of the Hodge index theorem was established in its full generality
by Dinh and Nguy$\hat{\text{e}}$n in \cite{DN}. Using this mixed Hodge index theorem, the author obtained in \cite{Li2} some Cauchy-Schwarz type inequalities for higher-dimensional cohomology classes on compact K\"{a}hler manifolds, both extended Khovanskii-Teissier inequalities and his previous work \cite{Li1}.

Before stating the next AF type inequality, we recall and set up some notation. Assume that $M$ is a compact connected K\"{a}hler manifold of complex dimension $n$. Let $\mathcal{K}\subset H^{1,1}(M;\mathbb{R})$ be the K\"{a}hler cone of
$M$, which is an open cone consisting of all K\"{a}hler classes on $M$. A $(1,1)$ cohomology class $\gamma\in H^{1,1}(M,\mathbb{R})$ is called a \emph{nef} class if
$\gamma\in\overline{\mathcal{K}}$, the closure of the K\"{a}hler cone. So any nef
class can be approximated by K\"{a}hler classes. For $\alpha_1,\ldots,\alpha_n\in H^{1,1}(M,\mathbb{R})$, denote by $\alpha_1\cdot\alpha_2\cdots\alpha_n$ their intersection number:
$$\alpha_1\cdot\alpha_2\cdots\alpha_n:=
\int_M\alpha_1\wedge\cdots\wedge\alpha_n\in\mathbb{R}.$$

The following AF type inequality (\ref{AF kahler class}) is a special case of the main results in \cite{Li2} (see \cite[Coro. 1.4]{Li2}), whose degenerated case (\ref{AF nef class}) was due to Demailly-Peternell (\cite[Prop. 2.5]{DP}, also cf. \cite[Coro. 3.1]{Li2}).
\begin{proposition}[AF type inequality for K\"{a}hler/nef classes]\label{AF kahler/nef}
Assume that $\alpha\in H^{1,1}(M,\mathbb{R})$ is an \emph{arbitrary} real $(1,1)$ cohomology class, $c,c_3,\ldots,c_n\in\mathcal{K}$ are K\"{a}hler classes, and $\gamma,\gamma_3,\ldots,\gamma_n\in\overline{\mathcal{K}}$ are nef classes. Then we have
\be\label{AF kahler class}(\alpha\cdot c\cdot c_3\cdots c_n)^2\geq
(\alpha^2\cdot c_3\cdots c_n)
(c^2\cdot c_3\cdots c_n),\ee
with equality if and only if $\alpha$ and $c$ are proportional: $\alpha=\lambda c$ with some $\lambda\in\mathbb{R}$.

By continuity, we have
\be\label{AF nef class}(\alpha\cdot\gamma\cdot\gamma_3\cdots\gamma_n)^2\geq
(\alpha^2\cdot\gamma_3\cdots\gamma_n)(\gamma^2\cdot\gamma_3
\cdots\gamma_n).\ee
\end{proposition}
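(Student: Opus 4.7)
The plan is to prove (\ref{AF kahler class}) by a reverse Cauchy--Schwarz argument applied to the symmetric bilinear form
$$Q(\beta_1, \beta_2) := \beta_1 \cdot \beta_2 \cdot c_3 \cdots c_n$$
on $H^{1,1}(M, \mathbb{R})$, polarized by the K\"{a}hler class $c$. The decisive input is the mixed Hodge--Riemann bilinear relation of Dinh--Nguy\^{e}n recalled just above the statement: since $c_3, \ldots, c_n$ and $c$ are all K\"{a}hler, $Q$ is strictly negative definite on the $c$-primitive subspace
$$P_c := \{\beta \in H^{1,1}(M, \mathbb{R}) : Q(\beta, c) = 0\}.$$
Note also that $Q(c, c) = c^2 \cdot c_3 \cdots c_n > 0$, because an intersection of K\"{a}hler classes is computed by integrating a strictly positive top form.

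Given an arbitrary real $(1,1)$ class $\alpha$, I would decompose it uniquely as
$$\alpha = t\, c + \alpha', \qquad t := \frac{Q(\alpha, c)}{Q(c, c)}, \qquad \alpha' \in P_c.$$
Then $Q(\alpha, \alpha) = t^2 Q(c, c) + Q(\alpha', \alpha')$ by $Q$-orthogonality, and multiplying through by $Q(c, c) > 0$ gives
$$Q(\alpha, \alpha)\, Q(c, c) \;=\; Q(\alpha, c)^2 \,+\, Q(\alpha', \alpha')\, Q(c, c) \;\leq\; Q(\alpha, c)^2,$$
using $Q(\alpha', \alpha') \leq 0$ from the Dinh--Nguy\^{e}n theorem. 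Unwinding notation this is exactly (\ref{AF kahler class}).

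For the equality characterization, equality in the above display forces $Q(\alpha', \alpha') = 0$, and the \emph{strict} negative definiteness of $Q$ on $P_c$ then forces $\alpha' = 0$, so $\alpha = t\, c$; the converse is immediate from bilinearity. Finally, the nef inequality (\ref{AF nef class}) follows by an approximation argument: fixing any K\"{a}hler class $\omega_0$, the perturbations $\gamma + \varepsilon \omega_0$ and $\gamma_i + \varepsilon \omega_0$ are K\"{a}hler for every $\varepsilon > 0$, so applying (\ref{AF kahler class}) with these classes in place of $c, c_3, \ldots, c_n$ and letting $\varepsilon \to 0^+$ preserves the inequality by continuity of the intersection pairing.

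The entire substance of the argument is thus concentrated in the mixed Hodge--Riemann relation of Dinh--Nguy\^{e}n, which I would invoke as a black box; the rest is the standard Hodge-index-style linear algebra first exploited by Khovanskii and Teissier in the algebro-geometric approach to the AF inequality. The main obstacle would be if one needed a self-contained proof of the mixed Hodge--Riemann relation, but given that the excerpt's introduction cites \cite{DN} for this, we are entitled to use it directly.
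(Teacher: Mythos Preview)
Your argument is correct and is essentially the approach underlying the cited reference \cite{Li2}: the paper does not reprove Proposition~\ref{AF kahler/nef} but quotes it from \cite[Coro.~1.4]{Li2} (and \cite{DP} for the nef case), where the proof is precisely the Hodge-index-style reverse Cauchy--Schwarz argument you outline, with the mixed Hodge--Riemann relation of Dinh--Nguy\^{e}n supplying the required signature information on the primitive subspace. One minor remark: your $P_c$ is defined by the vanishing of an intersection \emph{number}, whereas the paper's $P^{1,1}(M;\Omega,\omega)$ in (\ref{primitivedef}) is defined by the vanishing of an $(n,n)$-\emph{class}; these coincide since $H^{n,n}(M;\mathbb{R})\cong\mathbb{R}$ via integration, so there is no gap.
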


\begin{remark}
\begin{enumerate}
\item
When the author wrote the article \cite{Li2}, he didn't notice the related results in \cite{De} and \cite{DP}. However, we shall see in establishing our first main result, Theorem \ref{AF complex mixed discriminant}, that the characterization of the equality case of (\ref{AF kahler class}) will play a substantial role.

\item
Similar to the proof of (\ref{AF mixed volume for m}), repeated use of (\ref{AF kahler class}) and the characterization of its equality case yield, for any $2\leq m\leq n$,
\be\label{AF kahler class for m}(c_1\cdots c_m\cdot c_{m+1}\cdots
c_n)^m\geq\prod_{i=1}^m(c_i^m\cdot c_{m+1}\cdots c_n),~\forall~ c_1,\ldots,c_n\in\mathcal{K},\ee
with equality if and only if the K\"{a}hler classes $c_1,c_2,\ldots,c_m$ are all proportional. Consequently by continuity the inequality (\ref{AF kahler class for m}) remains true when these classes $c_i$ are nef. These results were stated and some of their proofs were outlined in \cite[Remark 5.3]{De}. The algebraic setting of these results was established in \cite[\S 1.6]{La} by Lazarsfeld. We shall illustrate in Section \ref{last section} a detailed and clean proof of (\ref{AF kahler class for m}) assuming the validity of (\ref{AF kahler class}).

\item
With the above notation understood, the original Khovanskii-Teissier inequalities indeed read
\be\label{KT inequality}(\gamma_1^m\cdot\gamma_2^{n-m})^2\geq
(\gamma_1^{m+1}\cdot\gamma_2^{n-m-1})
(\gamma_1^{m-1}\cdot\gamma_2^{n-m+1}),\forall~1\leq m\leq n-1,~ \forall~\gamma_1,\gamma_2\in\overline{\mathcal{K}},\ee
which follows directly from (\ref{AF nef class}).
\end{enumerate}
\end{remark}

\section{Main results}\label{main results}
In this section we shall state our main results, Theorems \ref{AF complex mixed discriminant}, \ref{determinantal} and \ref{equality case1} as well as some of their consequences.

Let $A$ be a complex Hermitian $n\times n$ matrix, i.e., $A=\overline{A^t}$, where $``t"$ denotes the transpose of a matrix. $A$ is called \emph{positive definite} if $$(z^1,\ldots,z^n)\cdot A\cdot(\overline{z^1},\ldots,\overline{z^n})^t\geq 0,\qquad (z^1,\ldots,z^n)\in\mathbb{C}^n,$$
and with equality if and only if $z^1=\cdots=z^n=0$. Positive semi-definiteness can be similarly defined. In most related articles on mixed discriminants only real symmetric matrices are treated and only a few of them investigated complex Hermitian matrices, e.g., \cite{Ba} and \cite{Gu} and the related references therein. Gurvits proved in \cite[Theorem 5.2]{Gu} a generalized AF type inequality for positive semi-definite complex Hermitian matrices: by taking $\alpha=(1,\ldots,1)$, $\alpha_1=(2,0,1,\ldots,1)$, $\alpha_2=(0,2,1,\ldots,1)$, and $\gamma_1=\gamma_2=\frac{1}{2}$, equation (21) reduces to
the following expected inequality (\ref{AF complex mixed discriminant inequality}) with an extra factor $(\frac{n^n}{n!})^2$. Now our first main result is a geometrical proof of the following complex version of the AF type inequality for mixed discriminants, which should be known to some experts on inequalities of matrices but never appeared in existing literature.

\begin{theorem}[Complex version of AF type inequality for mixed discriminants]\label{AF complex mixed discriminant}
Let $A$, $B$, $A_3,\ldots,A_n$ be complex Hermitian $n\times n$ matrices, where $A, A_3,\ldots,A_n$ are positive definite and $B$ is arbitrary. Then
\be\label{AF complex mixed discriminant inequality}
D(A,B,A_3,\ldots,A_n)^2\geq D(A,A,A_3,\ldots,A_n)D(B,B,A_3,\ldots,A_n),\ee
with equality if and only if $A$ and $B$ are proportional: $B=\lambda A$ for some real number $\lambda$. By continuity, \rm{(}\ref{AF complex mixed discriminant inequality}\rm{)} remains true if these $A,A_3,\ldots,A_n$ are only positive semi-definite.
\end{theorem}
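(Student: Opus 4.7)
The plan is to realize each Hermitian matrix as a translation-invariant $(1,1)$-form on a complex torus and then apply the K\"ahler-class inequality (\ref{AF kahler class}) in Proposition \ref{AF kahler/nef}. Concretely, let $\Lambda\subset\mathbb{C}^n$ be the standard integral lattice, let $T=\mathbb{C}^n/\Lambda$ be the resulting flat K\"ahler torus, and to each complex Hermitian $n\times n$ matrix $A=(a_{ij})$ associate the translation-invariant real $(1,1)$-form
\[
\omega_A:=\frac{\sqrt{-1}}{2}\sum_{i,j=1}^n a_{ij}\,dz^i\wedge d\bar{z}^j .
\]
Because $\omega_A$ is translation-invariant it is automatically $d$-closed, and it is positive (hence K\"ahler) exactly when $A$ is positive definite, and semi-positive (hence represents a nef class) exactly when $A$ is positive semi-definite. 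Moreover, since translation-invariant forms on $T$ inject into $H^{1,1}(T,\mathbb{R})$ (a harmonic-form/averaging argument), the map $A\mapsto[\omega_A]$ is an $\mathbb{R}$-linear injection from the space of Hermitian matrices into $H^{1,1}(T,\mathbb{R})$.

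The second step is a direct computation of the top-degree wedge product. Expanding $\omega_{A_1}\wedge\cdots\wedge\omega_{A_n}$ and keeping only the monomials in which each $dz^i$ and each $d\bar{z}^j$ appears exactly once, a short combinatorial rearrangement (permute indices, track signs, and substitute $\sigma'=\sigma^{-1}$, $\tau'=\tau\sigma^{-1}$) collapses the resulting double sum to the polarization expression for $D$. The outcome is the clean identity
\[
\omega_{A_1}\wedge\cdots\wedge\omega_{A_n}
=n!\,D(A_1,\ldots,A_n)\cdot\Bigl(\tfrac{\sqrt{-1}}{2}\Bigr)^n
dz^1\wedge d\bar{z}^1\wedge\cdots\wedge dz^n\wedge d\bar{z}^n,
\]
and integrating over $T$ gives
\[
[\omega_{A_1}]\cdot[\omega_{A_2}]\cdots[\omega_{A_n}]=n!\cdot\mathrm{Vol}(T)\cdot D(A_1,\ldots,A_n).
\]
The scalar factor $n!\,\mathrm{Vol}(T)$ is positive and common to every intersection number that appears below, so it will cancel.

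The final step is to feed this dictionary into (\ref{AF kahler class}). Take the K\"ahler classes $c=[\omega_A]$ and $c_i=[\omega_{A_i}]$ for $i=3,\ldots,n$ (which are K\"ahler by positive definiteness of $A,A_3,\ldots,A_n$), and the arbitrary real $(1,1)$-class $\alpha=[\omega_B]$ (real because $B$ is Hermitian). Inequality (\ref{AF kahler class}) then reads
\[
\bigl([\omega_B]\cdot[\omega_A]\cdot[\omega_{A_3}]\cdots[\omega_{A_n}]\bigr)^2
\;\geq\;
\bigl([\omega_B]^2\cdot[\omega_{A_3}]\cdots[\omega_{A_n}]\bigr)
\bigl([\omega_A]^2\cdot[\omega_{A_3}]\cdots[\omega_{A_n}]\bigr),
\]
which, after substituting the identity from the previous paragraph and cancelling the common positive factor $(n!\,\mathrm{Vol}(T))^2$, becomes exactly (\ref{AF complex mixed discriminant inequality}). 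For the equality case, the characterization in Proposition \ref{AF kahler/nef} forces $[\omega_B]=\lambda[\omega_A]$ for some $\lambda\in\mathbb{R}$; the injectivity of $A\mapsto[\omega_A]$ then upgrades this to $\omega_B=\lambda\omega_A$, i.e.\ $B=\lambda A$, as required. The degenerate positive semi-definite case of (\ref{AF complex mixed discriminant inequality}) follows by continuity via (\ref{AF nef class}). The main obstacle is verifying the wedge-product identity cleanly, since it requires a careful sign bookkeeping in the rearrangement from $dz^{i_1}\wedge d\bar{z}^{j_1}\wedge\cdots$ to the canonical volume form; once that is in place, the rest is just translating notation.
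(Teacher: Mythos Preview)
Your proposal is correct and follows essentially the same approach as the paper: both realize Hermitian matrices as translation-invariant $(1,1)$-forms on a complex torus, verify the wedge-product/mixed-discriminant identity, and then apply the K\"ahler-class inequality (\ref{AF kahler class}). The only cosmetic difference is in the equality case: the paper invokes the $\partial\bar{\partial}$-lemma and a Laplacian argument to upgrade $[\omega_B]=\lambda[\omega_A]$ to $B=\lambda A$, whereas you cite injectivity of $A\mapsto[\omega_A]$ via harmonic/averaging directly---these are equivalent on the flat torus.
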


\begin{remark}
As we have mentioned in the Introduction, the validity of this result in the complex version should be known to some experts (at least to Alexandrov himself). But it seems that it is in our current article the first time to be given an explicit statement and proof. Further, our proof is not combinatorial but purely algebro-geometric.
\end{remark}

Similar to (\ref{AF mixed volume for m}) and Corollary \ref{AF mixed volume for BM}, we also have for the mixed discriminants the following two direct consequences/improvements.
\begin{corollary}\label{AF complex mixed for m}
Let $A_1,\ldots,A_n$ be $n\times n$ complex Hermitian positive definite matrices. Then for each $2\leq m\leq n$, we have
\be\label{AF complex mixed formula for m}
D(A_1,\ldots,A_m,A_{m+1},\ldots,A_n)^m\geq\prod_{i=1}^mD(A_i[m],A_{m+1},\ldots,A_n),\ee
where the equality holds if and only if the matrices $A_1,\ldots,A_m$ are all proportional.
Moreover, by continuity, the inequality (\ref{AF complex mixed formula for m}) remains true if these $A_i$ are only assumed to be positive semi-definite.
\end{corollary}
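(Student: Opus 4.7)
The natural approach is induction on $m$, with base case $m=2$ being exactly Theorem~\ref{AF complex mixed discriminant}. Assume the corollary at level $m-1$; to deduce it at level $m$, I will combine three consequences of the base AF inequality.

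First, Theorem~\ref{AF complex mixed discriminant} applied to the distinguished pair $(A_1,A_2)$ gives
\[D(A_1,\ldots,A_n)^2\geq D(A_1[2],A_3,\ldots,A_n)\cdot D(A_2[2],A_3,\ldots,A_n). \qquad (\mathrm{i})\]
Second, invoking the inductive hypothesis on the $(m-1)$-tuple $(A_1,A_3,\ldots,A_m)$ with $A_1$ itself taken as an additional ``fixed'' slot (alongside $A_{m+1},\ldots,A_n$) yields
\[D(A_1[2],A_3,\ldots,A_n)^{m-1}\geq D(A_1[m],A_{m+1},\ldots,A_n)\cdot\prod_{i=3}^m D(A_1,A_i[m-1],A_{m+1},\ldots,A_n), \qquad (\mathrm{ii})\]
together with the analogous inequality obtained by replacing $A_1$ by $A_2$ throughout. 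Third, iterated AF establishes the log-concavity of $k\mapsto\log D(A_1[k],A_i[m-k],A_{m+1},\ldots,A_n)$ on $\{0,1,\ldots,m\}$, and specializing this Khovanskii--Teissier-type log-concavity at $k=1$ delivers
\[D(A_1,A_i[m-1],A_{m+1},\ldots,A_n)\geq D(A_1[m],A_{m+1},\ldots,A_n)^{1/m}\cdot D(A_i[m],A_{m+1},\ldots,A_n)^{(m-1)/m} \qquad (\mathrm{iii})\]
for every $i\geq 3$, with the symmetric bound after replacing $A_1$ by $A_2$.

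Substituting (iii) into (ii) and its $A_2$-counterpart, then multiplying the resulting two estimates by the $(m-1)$-th power of (i), produces after clean exponent cancellation
\[D(A_1,\ldots,A_n)^{2(m-1)}\geq\prod_{i=1}^m D(A_i[m],A_{m+1},\ldots,A_n)^{2(m-1)/m},\]
from which (\ref{AF complex mixed formula for m}) follows on extracting the $m/(2(m-1))$-th root; the positive semi-definite case is then handled by a standard continuity argument. The most delicate point is keeping the exponents straight through the substitution of (iii) into (ii), but the specific factor $1/m$ in (iii) is precisely what makes the final powers of $D(A_i[m],A_{m+1},\ldots,A_n)$ line up.

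For the equality characterization, I trace the chain backwards: equality in (\ref{AF complex mixed formula for m}) forces equality throughout (i), (ii) and (iii). Equality in (i) gives $A_1\propto A_2$ by the equality clause of Theorem~\ref{AF complex mixed discriminant}, while equality at every step of the log-concavity underlying (iii) forces $A_1\propto A_i$ for each $i\geq 3$. Together these show that $A_1,\ldots,A_m$ are all proportional, and the converse implication is immediate from the multilinearity of $D(\cdot,\ldots,\cdot)$.
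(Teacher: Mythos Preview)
Your proof is correct but follows a different inductive scheme from the paper's. The paper's argument (Theorem~\ref{appendix} in the Appendix) applies the level-$(m-1)$ hypothesis \emph{symmetrically}: for each $j\in\{1,\ldots,m\}$ it bounds $D(A_1,\ldots,A_n)^{m-1}$ from below by $\prod_{i\neq j}D(A_i[m-1],A_j,A_{m+1},\ldots,A_n)$, multiplies these $m$ inequalities together to obtain a quantity $T_m$, and then applies the very same induction hypothesis once more to each cross factor $D(A_i[m-1],A_j,\ldots)^{m-1}$ appearing in $T_m^{m-1}$. Your route instead singles out the pair $(A_1,A_2)$, uses the base case (i) on that pair, and then feeds the induction hypothesis (ii) together with the two-variable Khovanskii--Teissier log-concavity (iii) to control $D(A_1[2],\ldots)$ and $D(A_2[2],\ldots)$. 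Both approaches are valid; the paper's is more symmetric and entirely self-contained (it uses only the level-$(m-1)$ hypothesis, never the auxiliary log-concave sequence), whereas yours makes the role of the two-variable AF inequality more visible and gives a slightly cleaner path to the equality clause, since $A_1\propto A_2$ comes directly from (i) and $A_1\propto A_i$ for $i\geq 3$ from the equality case of a single step $d_1^2=d_0d_2$ in (iii).
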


\begin{corollary}\label{AF complex mixed discriminant for BM}
Assume that $A_0,A_1,A_{m+1},\ldots,A_n$ ($2\leq m\leq n$) are positive semi-definite complex Hermitian $n\times n$ matrices. Define
\be\label{g(lambda)}g(\lambda):=D\Big(\big((1-\lambda)A_0+\lambda A_1\big)[m],A_{m+1},\ldots,A_n\Big)^{\frac{1}{m}},~0\leq\lambda\leq 1.\ee
Then the function $g(\lambda)$ is concave on $[0,1]:$
\be\label{Brunn-Minkowski for mixed discriminant}\begin{split}&D\Big(\big((1-\lambda)A_0+\lambda A_1\big)[m],A_{m+1},\ldots,A_n\Big)^{\frac{1}{m}}\\ \geq&(1-\lambda)D(A_{0}[m],A_{m+1},\ldots,A_n)^{\frac{1}{m}} +\lambda D(A_{1}[m],A_{m+1},\ldots,A_n)^{\frac{1}{m}},\qquad 0\leq\lambda\leq 1.\end{split}\ee

Furthermore, if the equality case in (\ref{Brunn-Minkowski for mixed discriminant}) holds for \emph{some} $\lambda\in(0,1)$, it must hold for \emph{all} $\lambda\in[0,1]$: $$g(\lambda)\equiv(1-\lambda)g(0)+\lambda g(1),\qquad\forall~\lambda\in[0,1].$$
\end{corollary}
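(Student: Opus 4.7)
The plan is to mimic the proof of the general Brunn-Minkowski theorem for mixed volumes (Corollary~\ref{AF mixed volume for BM}): show $g''(\lambda)\le 0$ pointwise via a single application of the AF inequality for mixed discriminants, Theorem~\ref{AF complex mixed discriminant}.

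First I would write $B_\lambda:=(1-\lambda)A_0+\lambda A_1$ and $h(\lambda):=g(\lambda)^m=D(B_\lambda[m],A_{m+1},\ldots,A_n)$, which is a polynomial in $\lambda$ of degree at most $m$ and is non-negative on $[0,1]$. By multilinearity and symmetry of the mixed discriminant,
$$h'(\lambda)=m\cdot D\bigl(B_\lambda[m-1],\,A_1-A_0,\,A_{m+1},\ldots,A_n\bigr),$$
$$h''(\lambda)=m(m-1)\cdot D\bigl(B_\lambda[m-2],\,(A_1-A_0)[2],\,A_{m+1},\ldots,A_n\bigr).$$
On the open set $\{h>0\}$ a direct computation gives
$$g''(\lambda)=\tfrac{1}{m^2}\,h(\lambda)^{\frac{1}{m}-2}\bigl[m\,h(\lambda)h''(\lambda)-(m-1)(h'(\lambda))^2\bigr],$$
so concavity of $g$ reduces to the single pointwise inequality $m\,hh''\le(m-1)(h')^2$.

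To obtain this I would invoke Theorem~\ref{AF complex mixed discriminant} with $A:=B_\lambda$ (positive semi-definite, as a convex combination of PSD matrices), $B:=A_1-A_0$ (Hermitian, in general indefinite), and the $n-2$ remaining slots filled by $B_\lambda[m-2],A_{m+1},\ldots,A_n$, all PSD. Inequality~(\ref{AF complex mixed discriminant inequality}) then reads exactly
$$\bigl(h'(\lambda)/m\bigr)^2\ge h(\lambda)\cdot\tfrac{h''(\lambda)}{m(m-1)},$$
which rearranges to the required $m\,hh''\le(m-1)(h')^2$. Hence $g''\le 0$ wherever $h>0$, so (\ref{Brunn-Minkowski for mixed discriminant}) holds on each component of $\{h>0\}$; it then extends to all of $[0,1]$ by continuity of $g=h^{1/m}$, since $h$ is a polynomial and so either $h\equiv 0$ (whence $g\equiv 0$ and the claim is trivial) or $h$ vanishes at only finitely many points.

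The equality clause is a general fact about concave functions, independent of the particular setting. Setting $\phi(\lambda):=g(\lambda)-[(1-\lambda)g(0)+\lambda g(1)]$, $\phi$ is concave with $\phi(0)=\phi(1)=0$, and so $\phi\ge 0$ on $[0,1]$. If $\phi(\lambda_0)=0$ for some $\lambda_0\in(0,1)$, then for any $\mu\in(0,\lambda_0)$ the identity $\lambda_0=(1-s)\mu+s\cdot 1$ with $s=(\lambda_0-\mu)/(1-\mu)\in(0,1)$ combined with concavity yields $0=\phi(\lambda_0)\ge(1-s)\phi(\mu)+s\phi(1)=(1-s)\phi(\mu)$, forcing $\phi(\mu)=0$; the symmetric identity $\lambda_0=(1-s)\cdot 0+s\mu$ with $s=\lambda_0/\mu$ handles $\mu\in(\lambda_0,1)$. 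Thus $\phi\equiv 0$, as claimed.

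I do not anticipate any serious obstacle: once Theorem~\ref{AF complex mixed discriminant} is available, the argument is a direct transcription of Alexandrov's reduction from his inequality to the Brunn-Minkowski theorem for mixed volumes into the mixed-discriminant language, and the only minor technicality is the harmless handling of the finite zero set of the polynomial $h$.
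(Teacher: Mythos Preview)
Your approach is exactly the one the paper has in mind: it states Corollary~\ref{AF complex mixed discriminant for BM} without proof, pointing back to Corollary~\ref{AF mixed volume for BM} whose proof it describes as ``directly showing that $f''(\lambda)\le 0$ via (\ref{AF mixed volume}) and with some other arguments'', which is precisely your computation of $h',h'',g''$ and your single invocation of Theorem~\ref{AF complex mixed discriminant}. The equality clause via the concave auxiliary $\phi$ is also standard and fine.

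One small technical point deserves tightening. The step ``$g''\le 0$ on each component of $\{h>0\}$, hence concave there, hence (\ref{Brunn-Minkowski for mixed discriminant}) on all of $[0,1]$ by continuity'' is not valid as written: a continuous nonnegative function that is concave on each of finitely many subintervals need not be concave globally (e.g.\ $|x-\tfrac12|$ on $[0,1]$). The clean fix, which is surely what the paper's phrase ``some other arguments'' and the continuity remark in Theorem~\ref{AF complex mixed discriminant} intend, is to first treat the case where $A_0,A_1,A_{m+1},\ldots,A_n$ are positive \emph{definite}: then $B_\lambda$ is positive definite and $h(\lambda)>0$ for every $\lambda\in[0,1]$, so $g$ is smooth with $g''\le 0$ everywhere and is genuinely concave. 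The general positive semi-definite case then follows by approximating each matrix by positive definite ones and passing to the limit, since concavity is preserved under pointwise limits. With this adjustment your argument is complete.
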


We now state our second main result, which is inspired by a beautiful observation due to Shephard \cite{Sh}. Given a pair of positive integers $(p,n)$, we may apply the AF inequality (\ref{AF mixed volume}) in all possible ways to yield a set of quadratic inequalities satisfied by arbitrary $p$ convex bodies in $\mathbb{R}^n$. Shephard considered in \cite{Sh} if this set of inequalities is a full set for given $(p,n)$ and obtained several interesting results. Along this line, he proved a positive semi-definiteness of a matrix involved in the mixed volumes (\cite[(2.5)]{Sh}) and thus generalized the AF inequality (\ref{AF mixed volume}) to a determinantal case (\cite[(2.4)]{Sh}). What we notice is that Shephard's proof is only a \emph{formal} application of the inequality (\ref{AF mixed volume}) and can be extended to a general case, which is exactly our second main result:
\begin{theorem}\label{determinantal}
Let $C$ be a closed cone in a real vector space and $f: C\times C\rightarrow\mathbb{R}$ be a function which is symmetric: $f(u,v)=f(v,u)$, and bilinear in the following sense:
\be\label{determinallinear}f(\lambda^1u_1+\lambda^2u_2,\mu^1v_1+\mu^2v_2)
=\sum_{i,j=1}^2\lambda^i\mu^jf(u_i,v_j),\qquad\forall~\lambda^i,~\mu^i\geq0,~\forall~ u_i,~v_i\in C,\ee
and satisfies the AF type relation:
 \be\label{determinantalAFtype}\big[f(u,v)\big]^2\geq f(u,u)\cdot f(v,v),\qquad\forall~u,~v\in C.\ee
Arbitrarily choose $r+1$ elements $u_0,u_1,\ldots,u_r\in C$ $(r\geq 1)$ and denote $d_{ij}:=f(u_i,u_j)$ $(0\leq i,j\leq r).$ Then
the $r\times r$ matrix $$\big(d_{0i}d_{0j}-d_{00}d_{ij}\big)_{i,j=1}^r$$
is positive semi-definite. Furthermore,
\be\label{determinatalvalue}
0\leq\det\Big(\big(d_{0i}d_{0j}-d_{00}d_{ij}\big)_{i,j=1}^r\Big)=
(-1)^r\cdot d^{r-1}_{00}\cdot\det\Big(\big(d_{ij}\big)_{i,j=0}^r\Big).\ee
\end{theorem}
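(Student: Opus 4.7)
The plan is to establish the two assertions of Theorem \ref{determinantal} separately: the algebraic identity relating $\det(M)$ and $\det\bigl((d_{ij})_{i,j=0}^r\bigr)$, which is purely linear algebra, and the positive semi-definiteness of the matrix $M := (d_{0i}d_{0j}-d_{00}d_{ij})_{i,j=1}^r$, which is where the Alexandrov--Fenchel hypothesis (\ref{determinantalAFtype}) enters. Once $M\geq 0$ is known, the non-negativity of $\det(M)$ and of the right-hand side of (\ref{determinatalvalue}) is automatic.

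For the algebraic identity, I would assume first $d_{00}\neq 0$ and perform the elementary column operations $\mathrm{col}_i\mapsto\mathrm{col}_i-(d_{0i}/d_{00})\,\mathrm{col}_0$ for $i=1,\dots,r$ on the $(r{+}1)\times(r{+}1)$ matrix $D=(d_{ij})_{i,j=0}^r$. By the symmetry $d_{ij}=d_{ji}$, the resulting matrix is block triangular with $(0,0)$ block $d_{00}$ and lower-right $r\times r$ block of $(i,j)$-entry $d_{ij}-d_{0i}d_{0j}/d_{00}=-M_{ij}/d_{00}$. Hence $\det(D) = d_{00}\cdot\det(-M/d_{00}) = (-1)^r d_{00}^{1-r}\det(M)$, which rearranges to the claim. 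The degenerate case $d_{00}=0$ is handled by a continuity/perturbation of $u_0$.

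For positive semi-definiteness, the bilinearity (\ref{determinallinear}) lets $f$ be extended uniquely to a symmetric bilinear form $\bar{f}$ on the linear span $V := C-C$ via $\bar{f}(u_+-u_-,v_+-v_-) := f(u_+,v_+)-f(u_+,v_-)-f(u_-,v_+)+f(u_-,v_-)$, whose well-definedness is a routine consequence of (\ref{determinallinear}). Assume first that $u_0$ lies in the relative interior of $C$ (inside $V$). Given any $x=(x_1,\dots,x_r)\in\mathbb{R}^r$, set $w := \sum_{i=1}^r x_iu_i\in V$; then $v(t):= u_0+tw\in C$ for all sufficiently small $|t|$. Applying (\ref{determinantalAFtype}) to the pair $(u_0,v(t))\in C\times C$ and expanding both sides as polynomials in $t$ using the bilinearity of $\bar{f}$, the constant and linear terms cancel and one is left with
\[
t^2\Big[\bigl(\textstyle\sum_{i} x_i d_{0i}\bigr)^{\!2}-d_{00}\textstyle\sum_{i,j} x_i x_j d_{ij}\Big] \geq 0.
\]
This forces the bracketed quantity $\sum_{i,j}x_ix_jM_{ij}$ to be non-negative for every $x\in\mathbb{R}^r$, which is precisely $M\geq 0$.

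The main subtlety is the case $u_0\in\partial C$: the perturbation $v(t)=u_0+tw$ lies in $C$ for small $|t|>0$ and for arbitrary direction $w\in V$ only when $u_0$ is in the relative interior of $C$. This is handled by the standard continuity device of replacing $u_0$ with $u_0+\varepsilon e$ for $\varepsilon>0$ and $e$ a fixed element of the relative interior of $C$ (so that the perturbed basepoint itself lies in the relative interior), running the argument at the perturbed basepoint, and letting $\varepsilon\downarrow 0$; closedness of the cone of positive semi-definite matrices preserves the inequality in the limit, and continuity of the $d_{ij}$ in $u_0$ simultaneously recovers the determinantal identity in the degenerate case.
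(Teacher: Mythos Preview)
Your proof is correct. The determinantal identity is handled by the same Schur-complement mechanism in both arguments; you column-reduce assuming $d_{00}\neq 0$ and pass to the limit, while the paper expands $\det\bigl(d_{0i}\vec{\theta}-d_{00}\vec{\theta}_i\bigr)$ multilinearly to obtain a polynomial identity valid for all $d_{00}$ at once---a cosmetic difference only.

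For the positive semi-definiteness the two routes genuinely differ. You extend $f$ bilinearly to $V=C-C$, perturb $u_0$ into the relative interior, and read off $M\geq 0$ as the $t^2$-coefficient of $f(u_0,v(t))^2-f(u_0,u_0)f(v(t),v(t))\geq 0$ with $v(t)=u_0+tw$; this is the clean ``second-order content of AF along the diagonal'' argument. The paper instead follows Shephard's original device: given $(\lambda^1,\dots,\lambda^r)$, split the indices by the sign of $\lambda^i$ and apply (\ref{determinantalAFtype}) to the pair $u_0+t\sum_{\lambda^i\geq 0}\lambda^iu_i$ and $\tfrac{1}{t}u_0+\sum_{\lambda^j<0}|\lambda^j|u_j$, both of which are \emph{manifestly} nonnegative combinations and hence lie in $C$; the quadratic form $\sum_{p,q}\lambda^p\lambda^q M_{pq}$ then drops out as the $t^0$-term after a longer expansion, and one lets $t\to 0$. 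What your approach buys is conceptual transparency; what Shephard's buys is that it never leaves $C$, never extends $f$, and never invokes a relative interior, so it works verbatim even when the ambient vector space is infinite-dimensional with no useful topology. Your version recovers that generality too, once one observes that the whole computation can be carried out inside the finite-dimensional subcone generated by $u_0,\dots,u_r$ (equivalently, take $e=u_1+\cdots+u_r$ as the perturbation direction), a point worth making explicit.
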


The following direct consequences of Theorem \ref{determinantal}, whose proof will be included for the reader's convenience in Section \ref{Proof of determinantal}, are generalizations of the AF type inequalities in Proposition \ref{AF kahler/nef} and Theorem \ref{AF complex mixed discriminant} to determinantal cases.

\begin{corollary}\label{coro1}
For arbitrarily choose positive integer $r$ and $n+r-1$ positive semi-definite complex Hermitian $n\times n$ matrices: $A_0,A_1,\ldots,A_r,B_3,\ldots,B_n$, denote by $$d_{ij}=D(A_i,A_j,B_3,\ldots,B_n),\qquad 0\leq i,j\leq r.$$ Then we have
\begin{eqnarray}\label{determinantal general mixed}
\left\{ \begin{array}{ll}
\det\Big(\big(d_{0i}d_{0j}-d_{00}d_{ij}\big)_{i,j=1}^r\Big)\geq0,\\
~\\
(-1)^r\det\Big(\big(d_{ij}\big)_{i,j=0}^r\Big)\geq0.\\
\end{array} \right.
\end{eqnarray}
\end{corollary}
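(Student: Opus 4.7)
The plan is to deduce Corollary \ref{coro1} as a direct specialization of the abstract Theorem \ref{determinantal}. Fix the background matrices $B_3,\ldots,B_n$ and let $C$ denote the closed convex cone of positive semi-definite complex Hermitian $n\times n$ matrices inside the real vector space of Hermitian matrices. Define
$$f : C\times C \longrightarrow \mathbb{R},\qquad f(A,B) := D(A,B,B_3,\ldots,B_n),$$
which is real-valued because the determinant of a Hermitian matrix is real, so its polarization is too.

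Next I would verify the three hypotheses required by Theorem \ref{determinantal}. Symmetry $f(A,B)=f(B,A)$ is immediate from the symmetry of $D(\cdot,\ldots,\cdot)$. The bilinearity condition (\ref{determinallinear}) is precisely the multilinearity of $D(\cdot,\ldots,\cdot)$ in each slot, a direct consequence of its definition as the polarization of $\det$. The crucial AF type inequality (\ref{determinantalAFtype}),
$$f(A,B)^2 \geq f(A,A)\cdot f(B,B), \qquad \forall~A,B\in C,$$
is exactly the continuity extension of (\ref{AF complex mixed discriminant inequality}) in Theorem \ref{AF complex mixed discriminant} to positive semi-definite data, with $B_3,\ldots,B_n$ serving as the positive semi-definite background matrices and $A,B\in C$ (which are a fortiori Hermitian).

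With the hypotheses in place, I would apply Theorem \ref{determinantal} to the $r+1$ elements $u_i := A_i \in C$ for $0\leq i\leq r$. Writing $d_{ij} = f(A_i,A_j) = D(A_i,A_j,B_3,\ldots,B_n)$, the conclusion (\ref{determinatalvalue}) reads
$$0 \leq \det\Big(\big(d_{0i}d_{0j}-d_{00}d_{ij}\big)_{i,j=1}^r\Big) = (-1)^r d_{00}^{r-1}\det\Big(\big(d_{ij}\big)_{i,j=0}^r\Big),$$
and since $d_{00} = D(A_0,A_0,B_3,\ldots,B_n)\geq 0$, both inequalities in (\ref{determinantal general mixed}) fall out at once.

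I do not foresee any real obstacle; the argument is a formal instantiation and the entire non-trivial content is bundled into Theorems \ref{AF complex mixed discriminant} and \ref{determinantal}. The only minor nuisance will be the degenerate case $d_{00}=0$ with $r\geq 2$, in which the displayed identity only forces the larger determinant to vanish rather than controlling its sign; this is handled by the routine approximation $A_0 \rightsquigarrow A_0+\varepsilon I$ (still in $C$, in fact positive definite) and letting $\varepsilon\to 0^+$.
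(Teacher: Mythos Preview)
Your proposal is correct and follows exactly the paper's approach: instantiate Theorem \ref{determinantal} on the closed cone of positive semi-definite Hermitian matrices with $f(A,B)=D(A,B,B_3,\ldots,B_n)$, invoke Theorem \ref{AF complex mixed discriminant} for the AF hypothesis, read off the first inequality, and deduce the second from the identity (\ref{determinatalvalue}) once $d_{00}>0$ by an approximation argument.

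One small refinement is needed in your final step. Perturbing \emph{only} $A_0\rightsquigarrow A_0+\varepsilon I$ does not, in general, force $d_{00}(\varepsilon)>0$: if the background $B_3,\ldots,B_n$ are themselves degenerate one can have $D(X,X,B_3,\ldots,B_n)=0$ even for positive definite $X$ (e.g.\ $n=4$ with $B_3=B_4$ of rank one). The paper avoids this by perturbing \emph{all} of $A_0,\ldots,A_r,B_3,\ldots,B_n$ to be positive definite, which guarantees $d_{00}>0$ (strict positivity of mixed discriminants of positive definite matrices), establishes the second inequality in that case, and then lets the perturbation tend to zero. With this adjustment your argument is complete and coincides with the paper's.
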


\begin{corollary}\label{coro2}
Assume that $M$ is a compact K\"{a}hler manifold of complex dimension $n$. For arbitrarily choose positive integer $r$ and $n+r-1$ nef classes $\beta_0,\beta_1,\ldots,\beta_r,\gamma_3,\ldots,\gamma_n$ in $H^{1,1}(M;\mathbb{R})$, denote by \be\label{dij}d_{ij}=\beta_i\cdot\beta_j\cdot\gamma_3\cdots\gamma_n,\qquad (0\leq i,j\leq r).\ee
 Then we have
\begin{eqnarray}\label{determinantal general nef}
\left\{ \begin{array}{ll}
\det\Big(\big(d_{0i}d_{0j}-d_{00}d_{ij}\big)_{i,j=1}^r\Big)\geq0,\\
~\\
(-1)^r\det\Big(\big(d_{ij}\big)_{i,j=0}^r\Big)\geq0.\\
\end{array} \right.
\end{eqnarray}
\end{corollary}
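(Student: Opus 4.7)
The plan is to realize Corollary \ref{coro2} as a direct instance of the abstract Theorem \ref{determinantal}. Fix the background nef classes $\gamma_3,\ldots,\gamma_n$, take $C := \overline{\mathcal{K}} \subset H^{1,1}(M;\mathbb{R})$ (a closed convex cone), and define the symmetric pairing
$$f(\beta,\beta') := \beta \cdot \beta' \cdot \gamma_3 \cdots \gamma_n, \qquad \beta,\beta' \in C.$$

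First I would verify the three hypotheses of Theorem \ref{determinantal} for this $(C,f)$. Symmetry of $f$ is immediate. The bilinearity relation (\ref{determinallinear}), with nonnegative coefficients, follows from the $\mathbb{R}$-multilinearity of the intersection pairing together with the fact that $\overline{\mathcal{K}}$ is closed under nonnegative linear combinations, so the arguments never leave $C$. The AF type inequality (\ref{determinantalAFtype}) for $f$ is exactly the content of (\ref{AF nef class}) in Proposition \ref{AF kahler/nef}, applied with $\alpha := u$ and $\gamma := v$ for $u,v \in \overline{\mathcal{K}}$; here I only use that a nef class is in particular a real $(1,1)$-class, which is the single requirement on the first slot of (\ref{AF nef class}).

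Next I would apply Theorem \ref{determinantal} to the $r+1$ elements $u_i := \beta_i$, $i=0,1,\ldots,r$. The resulting quantities $d_{ij} = f(\beta_i,\beta_j)$ coincide with those defined in (\ref{dij}), so the theorem delivers simultaneously the positive semi-definiteness of $(d_{0i}d_{0j} - d_{00}d_{ij})_{i,j=1}^r$ (which gives the first inequality in (\ref{determinantal general nef})) and the identity
$$\det\big((d_{0i}d_{0j} - d_{00}d_{ij})_{i,j=1}^r\big) = (-1)^r\, d_{00}^{r-1}\, \det\big((d_{ij})_{i,j=0}^r\big).$$
When $d_{00} > 0$, dividing by $d_{00}^{r-1}$ yields the second inequality $(-1)^r \det((d_{ij})_{i,j=0}^r) \geq 0$.

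The only genuinely delicate point is the degenerate case $d_{00} = 0$ (possible because $\beta_0$ is merely nef, not necessarily big), where the identity above becomes vacuous for $r \geq 2$. I would handle this by a standard perturbation: fix any K\"{a}hler class $\omega$, replace $\beta_0$ by $\beta_0 + \varepsilon\omega \in \mathcal{K}$ so that the corresponding $d_{00}^{(\varepsilon)}>0$, deduce $(-1)^r \det\big((d_{ij}^{(\varepsilon)})_{i,j=0}^r\big) \geq 0$ from the non-degenerate case, and let $\varepsilon \to 0^+$ using continuity of the determinant in its entries. Beyond this short limiting step, the corollary is a purely formal combination of Theorem \ref{determinantal} (the abstract determinantal extension) with Proposition \ref{AF kahler/nef} (the quadratic AF inequality for nef classes, itself resting on the Dinh-Nguy\^{e}n mixed Hodge index theorem), so I do not anticipate any substantive obstacle.
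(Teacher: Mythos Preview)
Your approach is essentially identical to the paper's: apply Theorem \ref{determinantal} to $C=\overline{\mathcal{K}}$ with $f(\beta,\beta')=\beta\cdot\beta'\cdot\gamma_3\cdots\gamma_n$, read off the first inequality and the identity (\ref{determinatalvalue}), and handle the degenerate case by perturbation. There is, however, a small gap in your limiting step. Perturbing only $\beta_0$ to $\beta_0+\varepsilon\omega$ does \emph{not} force $d_{00}^{(\varepsilon)}>0$: the quantity $(\beta_0+\varepsilon\omega)^2\cdot\gamma_3\cdots\gamma_n$ can still vanish when the background classes $\gamma_3,\ldots,\gamma_n$ are merely nef (take any $\gamma_j=0$, or more generally $\gamma_3\wedge\cdots\wedge\gamma_n=0$ in cohomology). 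The paper avoids this by perturbing \emph{all} of $\beta_0,\ldots,\beta_r,\gamma_3,\ldots,\gamma_n$ to K\"{a}hler classes simultaneously, so that every $d_{ij}$ (in particular $d_{00}$) is strictly positive, deduces the second inequality in (\ref{determinantal general nef}) there, and then lets the perturbation go to zero. Once you make this adjustment your argument matches the paper's.
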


\begin{remark}
Inequalities (\ref{determinantal general mixed}) and (\ref{determinantal general nef}) reduce to the original AF type inequalities when taking $r=1$. The $r=2$ case of the inequality (\ref{determinantal general nef}) shall play a key role in the proof of our central result in this article, Theorem \ref{equality case1}.
\end{remark}


We now move to \emph{our central result} in this article. Recall that the characterization of the equality cases of the inequalities (\ref{AF kahler class}) and (\ref{AF kahler class for m}) are clear when the classes involved are K\"{a}hler: the equality cases hold if and only if the K\"{a}hler classes are proportional. This is \emph{not} the case when the classes involved in the inequalities (\ref{AF nef class}) and (\ref{AF kahler class for m}) are only assumed to be nef and it turns out to be a quite difficult problem. This characterization problem of the equality case was first proposed and studied by Teissier (\cite[p. 139]{Te3}) for his inequalities (\ref{KT inequality}), where only two nef classes are involved. Inspired by this, a more general characterization problem has been posed by the author in \cite[Question 3.3]{Li2}. When these two classes are nef and big (the notion of bigness will be reviewed in Section \ref{Proof of equality}), Teissier's problem was solved by Boucksom, Favre and Jonsson for projective manifolds (\cite[Theorem D]{BFJ}), as an application of their differentiability theorem in \cite{BFJ}, and then solved by Fu and Xiao for general compact K\"{a}hler manifolds (\cite{FX2}): just as expected, for two nef and big classes $\gamma_1$ and $\gamma_2$ \emph{all} the equalities in (\ref{KT inequality}) hold if and only if $\gamma_1$ and $\gamma_2$ are proportional. Note that the equality case stated in \cite[Theorem D]{BFJ} is, in our notation, $$(\gamma_1^{n-1}\gamma_2)^n=(\gamma_2^n)\cdot(\gamma_1^n)^{n-1},$$ which is equivalent to the validity of \emph{all} the equality cases in (\ref{KT inequality}) (cf. for instance, the equivalent statements of (1) and (3) in (\cite[Theorem
2.1]{FX2} and its simple proof in \cite[p. 3]{FX2}). Recently in \cite[Theorem 2.1]{LX1} Lehmann and Xiao, based on the ideas in \cite{FX1} and \cite{FX2}, solved the characterization problem for the equality case $m=n$ in (\ref{AF kahler class for m}), still assuming the classes involved are nef and big.

With these backgrounds in mind, now comes \emph{our central result} in this article, which completely settle the equality characterization problem in inequalities (\ref{AF kahler class}) and (\ref{AF kahler class for m}) for nef and big classes.
\begin{theorem}\label{equality case1}
Let $\gamma_1,\gamma_2,\ldots,\gamma_n$ be $n$ nef and big real $(1,1)$ cohomology classes on a compact K\"{a}hler manifold $M$ of complex dimension $n$. Then the following two assertions are equivalent:
\begin{enumerate}
\item
\be\label{equality 2}(\gamma_1\cdot\gamma_2\cdot\gamma_3\cdots\gamma_n)^2=
(\gamma_1^2\cdot\gamma_3\cdots\gamma_n)\cdot(\gamma_2^2\cdot\gamma_3\cdots\gamma_n);\ee

\item
the two $(n-1,n-1)$ real cohomology classes $\gamma_1\wedge\gamma_3\wedge\cdots\wedge\gamma_n$ and $\gamma_2\wedge\gamma_3\wedge\cdots\wedge\gamma_n$ are proportional in $H^{n-1,n-1}(M;\mathbb{R})$.
\end{enumerate}
\end{theorem}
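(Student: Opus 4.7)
My plan is to prove the two directions separately, with $(2) \Rightarrow (1)$ being elementary and the work concentrated on $(1) \Rightarrow (2)$, where the key input is the $r=2$ case of Corollary \ref{coro2} together with Poincar\'{e}/Serre duality. If (2) holds, writing $\gamma_2 \wedge \gamma_3 \wedge \cdots \wedge \gamma_n = \lambda\,\gamma_1 \wedge \gamma_3 \wedge \cdots \wedge \gamma_n$ in $H^{n-1,n-1}(M;\mathbb{R})$ and intersecting with $\gamma_1$ and with $\gamma_2$ yields $\gamma_1\gamma_2\gamma_3\cdots\gamma_n = \lambda\,\gamma_1^2\gamma_3\cdots\gamma_n$ and $\gamma_2^2\gamma_3\cdots\gamma_n = \lambda\,\gamma_1\gamma_2\gamma_3\cdots\gamma_n$, whose product is exactly the equality (\ref{equality 2}).

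For the nontrivial direction $(1) \Rightarrow (2)$, write $d_{ij} := \gamma_i \cdot \gamma_j \cdot \gamma_3 \cdots \gamma_n$. Since each $\gamma_k$ is nef and big, standard consequences of the Khovanskii-Teissier inequalities guarantee $d_{00}, d_{11} > 0$, so I can set $\lambda := d_{01}/d_{00}$ and $\mu := \gamma_2 - \lambda\gamma_1 \in H^{1,1}(M;\mathbb{R})$. The assumption (\ref{equality 2}) reads $d_{01}^2 = d_{00} d_{11}$ and translates into the twin identities $\mu \cdot \gamma_1 \cdot \gamma_3 \cdots \gamma_n = 0 = \mu \cdot \gamma_2 \cdot \gamma_3 \cdots \gamma_n$. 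The desired conclusion (2) is equivalent to $\mu \wedge \gamma_3 \wedge \cdots \wedge \gamma_n = 0$ in $H^{n-1,n-1}(M;\mathbb{R})$, which, by the non-degeneracy of the intersection pairing $H^{1,1}(M;\mathbb{R}) \times H^{n-1,n-1}(M;\mathbb{R}) \to \mathbb{R}$, is in turn equivalent to $\mu \cdot \alpha \cdot \gamma_3 \cdots \gamma_n = 0$ for every $\alpha \in H^{1,1}(M;\mathbb{R})$.

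To produce this vanishing, I apply Corollary \ref{coro2} with $r=2$ and $(\beta_0, \beta_1, \beta_2) = (\gamma_1, \gamma_2, \alpha)$ for an arbitrary nef class $\alpha$. Expanding the resulting $3 \times 3$ determinant and substituting $d_{01}^2 = d_{00} d_{11}$, the expression should collapse (after completing the square) to
\[
\det\bigl(d_{ij}\bigr)_{i,j=0}^{2} = -\frac{1}{d_{00}}\bigl(d_{00} d_{12} - d_{01} d_{02}\bigr)^{2}.
\]
The inequality $(-1)^r \det(d_{ij}) \geq 0$ from (\ref{determinantal general nef}) then forces this square to vanish, giving $d_{00} d_{12} = d_{01} d_{02}$, i.e., $d_{12} = \lambda d_{02}$, which is precisely $\mu \cdot \alpha \cdot \gamma_3 \cdots \gamma_n = 0$. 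Since the K\"{a}hler cone is open in $H^{1,1}(M;\mathbb{R})$, every real $(1,1)$-class is a difference of two K\"{a}hler (hence nef) classes, so linearity extends the vanishing to all $\alpha \in H^{1,1}(M;\mathbb{R})$, completing the proof.

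The main obstacle is in fact already absorbed into Corollary \ref{coro2}: the Dinh-Nguy\^{e}n mixed Hodge-Riemann bilinear relations are required to upgrade the AF inequality (\ref{AF nef class}) from K\"{a}hler to nef classes, which is what makes the $r=2$ determinantal bound available and gives the off-diagonal matching $d_{00}d_{12} = d_{01}d_{02}$. Once that tool is granted, the remaining steps are a short identity for a $3\times 3$ determinant and a standard duality argument; the bigness hypothesis enters only through ensuring positivity of $d_{00}$, so that $\lambda$ is well-defined and the completed square is meaningful.
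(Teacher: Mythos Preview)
Your proof is correct. Both you and the paper pivot on the $r=2$ case of Corollary~\ref{coro2} to extract $d_{00}d_{12}=d_{01}d_{02}$ from the equality hypothesis $d_{01}^2=d_{00}d_{11}$. The difference lies in how the argument is closed off. The paper applies the determinantal bound with a \emph{single} K\"ahler class $\omega$ in the role of $\beta_2$, deduces $(\gamma_2-\lambda\gamma_1)\cdot\omega\cdot\gamma_3\cdots\gamma_n=0$, promotes this via a Lefschetz-decomposition lemma (Lemma~\ref{lemma}) to membership of $\mu:=\gamma_2-\lambda\gamma_1$ in the primitive space $P^{1,1}(M;\gamma_3\wedge\cdots\wedge\gamma_n,\omega)$, checks $Q_{\gamma_3\wedge\cdots\wedge\gamma_n}(\mu,\mu)=0$, and then invokes Dinh--Nguy\^en's result on $\overline{\mathcal{K}}_{n-2}^{\mathrm{HR}}$ (Proposition~\ref{DN}) to conclude $\mu\wedge\gamma_3\wedge\cdots\wedge\gamma_n=0$. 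You instead let $\beta_2=\alpha$ range over \emph{all} nef classes, obtain $\mu\cdot\alpha\cdot\gamma_3\cdots\gamma_n=0$ for every such $\alpha$, extend by linearity to all of $H^{1,1}(M;\mathbb{R})$, and finish with Poincar\'e duality. Your route is more elementary at the end---it bypasses Proposition~\ref{DN} entirely---at the modest cost of running the determinantal inequality once per test class rather than once overall.

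One small correction to your closing paragraph: you mislocate where Dinh--Nguy\^en enters. Corollary~\ref{coro2} needs only the \emph{inequality} (\ref{AF nef class}), which follows from the K\"ahler case (\ref{AF kahler class}) by continuity; the mixed Hodge--Riemann relations underlie the proof of (\ref{AF kahler class}) itself, not its passage to nef classes. The specific result you allude to (Proposition~\ref{DN}) is precisely what your argument manages to \emph{avoid}.
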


Repeated use of Theorem \ref{equality case1} leads to the following improvement.
\begin{theorem}\label{equality case2}
Let $\gamma_1,\gamma_2,\ldots,\gamma_n$ be $n$ nef and big real $(1,1)$ cohomology classes on a compact K\"{a}hler manifold $M$ of complex dimension $n$ and $2\leq m\leq n$. Then the following two assertions are equivalent:
\begin{enumerate}
\item
\be\label{equality m}
(\gamma_1\cdots\gamma_m\cdot\gamma_{m+1}\cdots\gamma_n)^m=\prod_{i=1}^m
(\gamma_i^m\cdot\gamma_{m+1}\cdots\gamma_n);\ee

\item
the $(n-1,n-1)$ real cohomology classes $$\gamma_{i_1}\wedge\gamma_{i_2}\wedge\cdots\wedge
\gamma_{i_{m-1}}\wedge\gamma_{m+1}\wedge\cdots\wedge\gamma_n\qquad(1\leq i_1,\ldots,i_{m-1}\leq m)$$
 are all proportional in $H^{n-1,n-1}(M;\mathbb{R})$.
\end{enumerate}
\end{theorem}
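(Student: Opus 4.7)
The plan is to prove Theorem \ref{equality case2} by induction on $m$, with the base case $m=2$ being Theorem \ref{equality case1}. The argument mirrors the Appendix's derivation of (\ref{AF kahler class for m}) from (\ref{AF kahler class}) but now carries the equality characterization along every link of a chain of inequalities proving $b^m \geq \prod_i a_i$, where $b := \gamma_1 \cdots \gamma_m \cdot R$, $R := \gamma_{m+1} \cdots \gamma_n$, and $a_i := \gamma_i^m \cdot R$. For each multi-index $\beta$ with $|\beta| = m - 1$, write $\Omega_\beta := \gamma_1^{\beta_1} \wedge \cdots \wedge \gamma_m^{\beta_m} \wedge \gamma_{m+1} \wedge \cdots \wedge \gamma_n \in H^{n-1,n-1}(M;\mathbb{R})$.

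For the inductive step, assemble the chain in three pieces: (a) apply (\ref{AF kahler class}) to the pair $(\gamma_1, \gamma_2)$ to obtain $b^2 \geq uv$ with $u := \gamma_1^2 \gamma_3 \cdots \gamma_m \cdot R$ and $v := \gamma_2^2 \gamma_3 \cdots \gamma_m \cdot R$; (b) view $u$ as the top intersection of the $m-1$ nef and big classes $\gamma_1, \gamma_3, \ldots, \gamma_m$ with the nef and big frozen factor $\gamma_1 R$, and apply the inductive hypothesis to get $u^{m-1} \geq a_1 \prod_{i=3}^m (\gamma_1 \gamma_i^{m-1} \cdot R)$, and symmetrically $v^{m-1} \geq a_2 \prod_{i=3}^m (\gamma_2 \gamma_i^{m-1} \cdot R)$; (c) bound each $\gamma_1 \gamma_i^{m-1} \cdot R$ via the Khovanskii-Teissier inequality $(\gamma_1 \gamma_i^{m-1} \cdot R)^m \geq a_1 a_i^{m-1}$, which is an iterate of Theorem \ref{equality case1}. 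Multiplying the resulting bounds and combining via $b^2 \geq uv$ yields $b^m \geq \prod_i a_i$.

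For direction $(2) \Rightarrow (1)$, under the proportionality hypothesis every link of the chain -- the AF link (a), the inductive $(m-1)$-link (b), and the KT links (c) -- becomes an equality: Theorem \ref{equality case1} identifies the saturation of an AF or KT link with the proportionality of two specific $\Omega_\beta$'s, while the inductive link's saturation reduces, by the inductive hypothesis, to mutual proportionality of $\Omega_\beta$'s with certain support constraints; all such proportionalities are furnished by $(2)$. Hence every link is an equality, so $b^m = \prod_i a_i$.

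For direction $(1) \Rightarrow (2)$, the assumed equality forces every link to saturate. Reading these saturations backwards through Theorem \ref{equality case1} and the inductive hypothesis yields mutual proportionalities: the $u$-side gives mutual proportionality of all $\Omega_\beta$ with $\beta_2 = 0$ and $\beta_1 \geq 1$; the $v$-side, of those with $\beta_1 = 0$ and $\beta_2 \geq 1$; the KT branches, of $\{\gamma_1^k \gamma_i^{m-1-k} \cdot R\}_{k=0}^{m-1}$ for each $i \geq 3$ (and symmetrically for $\gamma_2$). Running the same chain with any pair $(p, q)$ in place of $(1, 2)$ yields the analogous statements. Since $|\beta| = m - 1 < m$, the pigeonhole principle ensures every multi-index $\beta$ with $|\beta| = m-1$ has a zero coordinate $\beta_p$ together with a nonzero coordinate $\beta_q$; the $v$-side of the chain built around the pair $(\gamma_p, \gamma_q)$ then places $\Omega_\beta$ in a proportionality class, and the various such classes glue into a single one via their shared elements (for instance $\Omega_{(m-1)e_p} = \gamma_p^{m-1} \wedge \gamma_{m+1} \wedge \cdots \wedge \gamma_n$ sits in many of them), giving $(2)$. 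The principal subtle point I anticipate is this gluing step, which rests on the combinatorial overlap of the different chains together with the non-vanishing of each $\Omega_\beta$ guaranteed by the nef-and-bigness assumption.
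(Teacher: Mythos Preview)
Your proposal is correct and follows the same inductive strategy as the paper's Appendix (Theorem~\ref{appendix}): establish the $m$-case by chaining together the $(m{-}1)$-case and instances of Theorem~\ref{equality case1}, then read the equality characterization off the saturation of every link. The specific chain you build differs from the paper's, however. The paper applies the $(m{-}1)$-case symmetrically over each distinguished index $j$ to obtain (\ref{13}), multiplies over $j$ to reach (\ref{14}), and applies the $(m{-}1)$-case once more inside $T_m$ to get (\ref{16}); saturation of (\ref{13}) then directly gives, for each $j$, mutual proportionality of all $\Omega_\beta$ with $\beta_j=1$, and (\ref{15}) supplies the Khovanskii--Teissier chains linking in the extremal $\Omega_{(m-1)e_i}$. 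Your chain instead breaks symmetry at step~(a) by singling out the pair $(\gamma_1,\gamma_2)$, and you recover the full statement by rerunning over all pairs $(p,q)$. Both routes end in a combinatorial gluing that the paper dismisses with ``easily deduce''; you correctly flag this as the subtle point, and your pigeonhole-plus-overlap argument works (for fixed $p$ the classes $\{\Omega_\beta:\beta_j=0,\ \beta_p\ge 1\}$ over $j\neq p$ all contain $\Omega_{(m-1)e_p}$, and the merged families $\{\Omega_\beta:\beta_p\ge 1\}$ for distinct $p$ overlap pairwise). The paper's symmetric chain is tidier and makes the gluing shorter, but your variant is equally valid.
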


Theorem \ref{equality case2}, together with a beautiful injectivity result due to Fu and Xiao in \cite{FX1}, yields the above-mentioned related results in \cite{BFJ}, \cite{FX2} and \cite{LX1}.

\begin{corollary}[Lehmann-Xiao, Fu-Xiao]\label{equality case3}
Let $\gamma_1,\gamma_2,\ldots,\gamma_n$ be $n$ nef and big real $(1,1)$ cohomology classes on a compact K\"{a}hler manifold $M$ of complex dimension $n$. Then the equality
\be\label{equality n}
(\gamma_1\cdot\gamma_2\cdots\gamma_n)^n=\prod_{i=1}^n
(\gamma_i^n)
\ee
holds if and only if these classes $\gamma_1,\ldots,\gamma_n$ are all proportional in $H^{1,1}(M;\mathbb{R})$. In particular, setting $\gamma_1=\gamma_2=\cdots=\gamma_{n-1}$ leads to the related results in \cite[Theorem 2.1]{FX2} and \cite[Theorem D]{BFJ}.
\end{corollary}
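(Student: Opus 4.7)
The plan is to deduce Corollary \ref{equality case3} by combining the $m=n$ specialization of Theorem \ref{equality case2} with the injectivity theorem of Fu and Xiao from \cite{FX1}. The ``if'' direction is immediate: writing $\gamma_i=\lambda_i\gamma_1$ for positive scalars $\lambda_i$ (with $\lambda_1=1$), both sides of (\ref{equality n}) equal $\bigl(\prod_{i=1}^n\lambda_i\bigr)^n\cdot(\gamma_1^n)^n$. For the ``only if'' direction, assume (\ref{equality n}) and specialize Theorem \ref{equality case2} to $m=n$: the trailing tail $\gamma_{m+1},\ldots,\gamma_n$ is empty, so (\ref{equality n}) is exactly (\ref{equality m}) with $m=n$. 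The conclusion of Theorem \ref{equality case2} is then that all $(n-1,n-1)$ classes
\[
\gamma_{i_1}\wedge\gamma_{i_2}\wedge\cdots\wedge\gamma_{i_{n-1}},\qquad 1\leq i_1,\ldots,i_{n-1}\leq n,
\]
are mutually proportional in $H^{n-1,n-1}(M;\mathbb{R})$.

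The next step is a cancellation argument. Fix $k\in\{2,\ldots,n\}$ and compare the class $\gamma_1\wedge\gamma_2\wedge\cdots\wedge\widehat{\gamma_k}\wedge\cdots\wedge\gamma_n$ (obtained by omitting $\gamma_k$) with $\gamma_2\wedge\gamma_3\wedge\cdots\wedge\gamma_n$ (obtained by replacing $\gamma_1$ with $\gamma_k$ in the previous and using graded-commutativity). Both are of the form above, so the previous step yields some $\lambda_k\in\mathbb{R}$ with
\[
(\gamma_1-\lambda_k\gamma_k)\wedge\bigwedge_{\substack{2\leq j\leq n\\ j\neq k}}\gamma_j=0\qquad\text{in }H^{n-1,n-1}(M;\mathbb{R}).
\]
Now invoke the Fu-Xiao injectivity theorem from \cite{FX1}: for any nef and big real $(1,1)$ classes $\beta_1,\ldots,\beta_{n-2}$ on $M$, the linear map $\alpha\mapsto\alpha\wedge\beta_1\wedge\cdots\wedge\beta_{n-2}$ from $H^{1,1}(M;\mathbb{R})$ to $H^{n-1,n-1}(M;\mathbb{R})$ is injective. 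Taking $\{\beta_1,\ldots,\beta_{n-2}\}=\{\gamma_j:2\leq j\leq n,\,j\neq k\}$, which are indeed nef and big by hypothesis, forces $\gamma_1=\lambda_k\gamma_k$. Letting $k$ range over $\{2,\ldots,n\}$ yields the desired proportionality of $\gamma_1,\ldots,\gamma_n$. The ``in particular'' sentence of the corollary then follows by specializing to $\gamma_1=\cdots=\gamma_{n-1}$, in which case (\ref{equality n}) collapses to the two-class equality case $(\gamma_1^{n-1}\cdot\gamma_n)^n=(\gamma_1^n)^{n-1}(\gamma_n^n)$ treated in \cite[Theorem D]{BFJ} and \cite[Theorem 2.1]{FX2}.

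The main obstacle lies not in the logical skeleton, which is simply ``apply Theorem \ref{equality case2}, then cancel $n-2$ nef-and-big factors'', but in correctly invoking the Fu-Xiao injectivity in the \emph{mixed}-class form needed above. If one had to work harder, it would be precisely here: one must either quote a version of \cite{FX1} stated directly for $n-2$ possibly distinct nef-and-big classes, or, failing that, extend the single-class version by a polarization/openness argument inside the (open) nef-and-big cone, perhaps exploiting the mixed Hodge-Riemann relations of Dinh-Nguy\^{e}n \cite{DN}. This is also exactly the reason Theorem \ref{equality case2} does not tautologically imply Corollary \ref{equality case3}: one still needs an input that converts wedge-level proportionality into cup-level proportionality.
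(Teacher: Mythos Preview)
Your overall strategy matches the paper's: specialize Theorem \ref{equality case2} to $m=n$, then use Fu--Xiao injectivity to pass from proportionality of $(n-1,n-1)$ classes to proportionality of $(1,1)$ classes. The difference is in \emph{which} $(n-1,n-1)$ classes you feed into Fu--Xiao, and this is exactly where your own last paragraph flags a potential gap.

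You compare the mixed products $\gamma_1\wedge\cdots\wedge\widehat{\gamma_k}\wedge\cdots\wedge\gamma_n$ and $\gamma_2\wedge\cdots\wedge\gamma_n$, and then need the linear map $\alpha\mapsto\alpha\wedge\beta_1\wedge\cdots\wedge\beta_{n-2}$ to be injective for \emph{mixed} nef and big $\beta_j$. As you yourself note, this mixed Hard Lefschetz--type statement is not what \cite{FX1} states, and justifying it would require extra work (e.g.\ via \cite{DN}). The paper sidesteps this entirely: among the proportional classes $\gamma_{i_1}\wedge\cdots\wedge\gamma_{i_{n-1}}$ are in particular the \emph{pure powers} $\gamma_1^{n-1},\gamma_2^{n-1},\ldots,\gamma_n^{n-1}$ (take $i_1=\cdots=i_{n-1}=i$). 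The Fu--Xiao result \cite[Theorems 1.2, 2.4]{FX1} is precisely that the map $\gamma\mapsto\gamma^{n-1}$ is injective on rays in the nef and big cone, so $\gamma_i^{n-1}\propto\gamma_j^{n-1}$ gives $\gamma_i\propto\gamma_j$ directly. This costs nothing extra and makes the obstacle you identified disappear.
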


\begin{proof}
According to Theorem \ref{equality case2}, the equality (\ref{equality n}) holds if and only if the $(n-1,n-1)$ cohomology classes
$$\gamma_{i_1}\wedge\gamma_{i_2}\wedge\cdots\wedge\gamma_{i_{n-1}},\qquad 1\leq i_1,\ldots,i_{n-1}\leq n,$$
are all proportional. In particular, the cohomology classes $\gamma_1^{n-1},\gamma^{n-1}_2\ldots,\gamma_n^{n-1}$ are all proportional. Then an injectivity result for nef and big classes due to Fu and Xiao (\cite[Theorems 1.2, 2.4]{FX1}) tells us that these $\gamma_1,\gamma_2,\ldots,\gamma_n$ themselves must be proportional.
\end{proof}

\begin{remark}
\begin{enumerate}
\item
In contrast to the special cases in Corollary \ref{equality case3} obtained by Lehmann, Fu, Xiao etc., in the general case in Theorem \ref{equality case1} the proportionality of $\gamma_1\wedge\gamma_3\wedge\cdots\wedge\gamma_n$ and $\gamma_2\wedge\gamma_3\wedge\cdots\wedge\gamma_n$ is \emph{not} enough to derive that of $\gamma_1$ and $\gamma_2$, even if the class $\gamma_3\wedge\cdots\wedge\gamma_n$ contains a strictly positive form in the strong sense (cf. \cite[Remark 2.9]{DN2}).

\item
The special case of the injectivity result of Fu and Xiao for K\"{a}hler classes (\cite[Prop. 1.1]{FX1}) can also be obtained by a result of Hard Lefschetz type property due to Ross and Toma (\cite[Coro. 8.6]{RT}).
\end{enumerate}
\end{remark}

\section{Proof of Theorem \ref{AF complex mixed discriminant}}\label{Proof of AF complex mixed discriminant}
In this section we shall
apply Proposition \ref{AF kahler/nef} to prove Theorem \ref{AF complex mixed discriminant}.

\begin{proof}
First note that each complex Hermitian $n\times n$ matrix
$A$ corresponds to a constant real $(1,1)$-form on $\mathbb{C}^n$ under the global coordinate system $(z^1,\ldots,z^n)$ as follows:
$$A=\big(a_{ij}\big)_{i,j=1}^n\longleftrightarrow\alpha(A):=
\sqrt{-1}\sum_{i,j=1}^na_{ij}\text{d}z^i\wedge\text{d}\overline{z^j}.$$

By definition $\alpha(A)$ is a K\"{a}hler form if and only if $A$ is positive definite. Moreover, their wedges satisfy
\be\label{wedge}
\begin{split}
\alpha(A_1)\wedge\alpha(A_2)\wedge\cdots\wedge\alpha(A_n)&
=\bigwedge_{k=1}^n\big(\sqrt{-1}\sum_{i,j=1}^na_{ij}^{(k)}
\text{d}z^i\wedge\text{d}\overline{z^j}\big)
\qquad\Big(A_k:=\big(a_{ij}^{(k)}\big)_{i,j=1}^n\Big)\\
&=\Big(\sum_{\sigma\in S_n}\begin{vmatrix}
a^{(\sigma(1))}_{11}&\cdots&a^{(\sigma(n))}_{1n}\\
\vdots&\ddots&\vdots\\
a^{(\sigma(1))}_{n1}&\cdots&a^{(\sigma(n))}_{nn}
\end{vmatrix}\Big)\cdot\bigwedge_{i=1}^n\big(\sqrt{-1}\text{d}z^i
\wedge\text{d}\overline{z^i}\big)\\
&=n!\cdot D(A_1,\ldots,A_n)\cdot2^n\cdot\text{d}V(\mathbb{C}^n),
\end{split}
\ee
where $$\text{d}V(\mathbb{C}^n)=\text{d}x^1\wedge\text{d}y^1\wedge
\cdots\wedge\text{d}x^n\wedge\text{d}y^n\qquad (z^i:=x^i+\sqrt{-1}y^i)$$ is the volume form of $\mathbb{C}^n$ with respect to the standard Euclidean metric.

With this simple but crucial observation in mind, we can now proceed to prove Theorem \ref{AF complex mixed discriminant}. Let $B,A,A_3,\ldots,A_n$ be the complex Hermitian matrices as in Theorem \ref{AF complex mixed discriminant}. Then in our notation $\alpha(B),\alpha(A),\alpha(A_3),\ldots,\alpha(A_n)$ are constant real $(1,1)$-forms on $\mathbb{C}^n$ and among them the latter $n-1$ forms are K\"{a}hler. They are constant forms and thus can descend to the complex torus $$\mathbb{C}^n\big/(\mathbb{Z}+\sqrt{-1}\mathbb{Z})^n=:T,$$ which, for simplicity, still denote on $T$ by $\alpha(B),\alpha(A),\alpha(A_3),\ldots,\alpha(A_n)$. This means that, among the induced real $(1,1)$ cohomology classes
$$\big[\alpha(B)\big],~\big[\alpha(A)\big],~\big[\alpha(A_3)\big],~\ldots,~[\alpha(A_n)]$$ on $T$, the latter $n-1$ classes are K\"{a}hler. Thus applying the inequality (\ref{AF kahler class}) to them yields
\be\label{1}
\begin{split}
&\Big(\big[\alpha(B)\big]\cdot\big[\alpha(A)\big]\cdot\big[
\alpha(A_3)\big]\cdots\big[\alpha(A_n)\big]\Big)^2\\
\geq&\Big(\big[\alpha(B)\big]\cdot\big[\alpha(B)\big]\cdot\big[
\alpha(A_3)\big]\cdots\big[\alpha(A_n)\big]\Big)\cdot
\Big(\big[\alpha(A)\big]\cdot\big[\alpha(A)\big]\cdot\big[
\alpha(A_3)\big]\cdots\big[\alpha(A_n)\big]\Big),
\end{split}\ee
with equality if and only if the two cohomology classes $\big[\alpha(B)\big]$ and $\big[\alpha(A)\big]$ are proportional: $$\big[\alpha(B)\big]=\lambda\big[\alpha(A)\big]\in H^{1,1}(T;\mathbb{R})$$
 with some real number $\lambda$. Note that
\be\label{2}\begin{split}
\big[\alpha(B)\big]\cdot\big[\alpha(A)\big]\cdot\big[\alpha(A_3)\big]
\cdots\big[\alpha(A_n)\big]&=
\int_T\alpha(B)\wedge\alpha(A)\wedge\alpha(A_3)\wedge\cdots\wedge\alpha(A_n)\\
&=\int_T\big(n!\cdot D(B,A,A_3,\ldots,A_n)\cdot2^n\cdot\text{d}V(T)
\qquad\big(\text{by (\ref{wedge})}\big)\\
&=n!\cdot2^n\cdot D(B,A,A_3,\ldots,A_n)\cdot\text{Vol}(T),\end{split}\ee
where $\text{Vol}(T)$ is the volume of $T$ with the standard metric induced from $\mathbb{C}^n$. Substituting (\ref{2}) into (\ref{1}) yields exactly the desired inequality (\ref{AF complex mixed discriminant inequality}) and it now suffices to characterize its equality case.

By the $\partial\bar{\partial}$-lemma on compact K\"{a}hler manifolds, $\big[\alpha(B)\big]=\lambda\big[\alpha(A)\big]$ is equivalent to the existence of a smooth real function $f$ on $T$, unique up to an additive constant, such that
\be\label{3}
\sqrt{-1}\sum_{i,j=1}^n(b_{ij}-\lambda a_{ij})
\text{d}z^i\wedge\text{d}\overline{z^j}=
\sqrt{-1}\partial\bar{\partial}f,\qquad \Big(A:=\big(a_{ij}\big)_{i,j=1}^n,~B:=\big(b_{ij}\big)_{i,j=1}^n\Big).
\ee

Taking trace with respect to the standard metric on $T$ on both sides of (\ref{3}) yields $$\Delta f=\text{constant},\qquad\text{$\Delta$: the Laplacian operator}.$$
This, together with the facts of compactness and connectedness of $T$, leads to the fact that $f$ itself be a constant and thus $B=\lambda A$, which completes the proof of Theorem \ref{AF complex mixed discriminant}.
\end{proof}

\section{Proofs of Theorem \ref{determinantal} and its corollaries}\label{Proof of determinantal}
We shall prove in this section Theorem \ref{determinantal} as well as its two corollaries: Corollaries \ref{coro1} and \ref{coro2}.

\subsection{Proof of Theorem \ref{determinantal}}
The positive semi-definiteness of the matrix $$\big(d_{0i}d_{0j}-d_{00}d_{ij}\big)_{i,j=1}^r$$ is equivalent to
\be\label{positive semi}\sum_{i,j=1}^r\lambda^i\lambda^j
(d_{0i}d_{0j}-d_{00}d_{ij})\geq0,\qquad\forall~(\lambda^1,
\ldots,\lambda^r)\in\mathbb{R}^r.\ee

The idea of the following proof of (\ref{positive semi}) essentially is due to \cite[p. 133]{Sh}, but with some simplifications and filling in some necessary details.

We arbitrarily fix  $(\lambda^1,\ldots,\lambda^r)\in\mathbb{R}^r$ and let $t>0$ be an indeterminate. Without loss of generality, we may assume that
$$\lambda^i\geq 0,~1\leq i\leq s;\qquad\lambda^j\leq 0,~s+1\leq j\leq r.$$
For simplicity in the sequel we shall assume that
$$1\leq i,i_1,i_2\leq s,\qquad s+1\leq j,j_1,j_2\leq r$$
and apply the Einstein summand convention for these indices. Under the assumption of (\ref{determinantalAFtype}) we have
\be\label{4}
\begin{split}&\big[f(u_0+t\lambda^iu_i,\frac{1}{t}u_0
-\lambda^ju_j)\big]^2\\
-&f(u_0+t\lambda^{i_1}u_{i_1},u_0+t\lambda^{i_2}u_{i_2})\cdot
f(\frac{1}{t}u_0-\lambda^{j_1}u_{j_1},\frac{1}{t}u_0-
\lambda^{j_2}u_{j_2})\geq0.\end{split}\ee
Clearly the reason for separating the positive and negative coefficients $\lambda^i$ and $\lambda^j$ is to make sure that the elements $u_0+t\lambda^iu_i$ and $\frac{1}{t}u_0
-\lambda^ju_j$ remain in the cone where the AF inequality (\ref{determinantalAFtype}) holds.

Applying bilinear property (\ref{determinallinear}) and the symmetry of $f(\cdot,\cdot)$ to (\ref{4}) yield
\be\label{5}\begin{split}
&(\frac{1}{t}d_{00}+\lambda^id_{0i}-\lambda^jd_{0j}-
t\lambda^i\lambda^jd_{ij})^2\\
-&
(d_{00}+2t\lambda^{i}d_{0i}
+t^2\lambda^{i_1}\lambda^{i_2}d_{i_1i_2})
\cdot
(\frac{1}{t^2}d_{00}-\frac{2}{t}\lambda^{j}d_{0j}
+\lambda^{j_1}\lambda^{j_2}d_{j_1j_2})\geq0.\end{split}\ee

The constant term on the LHS of (\ref{5}) is
\be\begin{split}
&\big[(\lambda^id_{0i}-\lambda^jd_{0j})^2
-2\lambda^i\lambda^jd_{00}d_{ij}\big]
-\big[\lambda^{j_1}\lambda^{j_2}d_{00}d_{j_1j_2}
-4\lambda^i\lambda^jd_{0i}d_{0j}+
\lambda^{i_1}\lambda^{i_2}d_{00}d_{i_1i_2}\big]\\
=&(\lambda^{i_1}\lambda^{i_2}d_{0i_1}d_{0i_2}+\lambda^{j_1}\lambda^{j_2}d_{0j_1}d_{0j_2}
-2\lambda^i\lambda^jd_{0i}d_{0j}-2\lambda^i\lambda^jd_{00}d_{ij})\\
-&(\lambda^{j_1}\lambda^{j_2}d_{00}d_{j_1j_2}
-4\lambda^i\lambda^jd_{0i}d_{0j}+\lambda^{i_1}\lambda^{i_2}d_{00}d_{i_1i_2})\\
=&\lambda^{i_1}\lambda^{i_2}(d_{0i_1}d_{0i_2}-d_{00}d_{i_1i_2})+
\lambda^{j_1}\lambda^{j_2}(d_{0j_1}d_{0j_2}-d_{00}d_{j_1j_2})
+2\lambda^i\lambda^j(d_{0i}d_{0j}-d_{00}d_{ij})\\
=&\sum_{p,q=1}^r\lambda^p\lambda^q(d_{0p}d_{0q}-d_{00}d_{pq}).
\end{split}\nonumber\ee

It is easy to check that the coefficients in front of $t^{-1}$ and $t^{-2}$ on the LHS of (\ref{5}) vanish. Therefore (\ref{5}) becomes
\be\label{6}(\cdots)t+(\cdots)t^2+\sum_{p,q=1}^r\lambda^p\lambda^q(d_{0p}d_{0q}-d_{00}d_{pq})\geq0.\ee
Here the two $(\cdots)$ denote respectively the coefficients in front of $t$ and $t^2$, with whose concrete values we are not concerned. Letting $t$ tend to $0$ in (\ref{6}) yields the desired inequality (\ref{positive semi}).

Next we prove (\ref{determinatalvalue}). Let
\begin{eqnarray}
\left\{ \begin{array}{ll}
\vec{\theta}:=(d_{01},\ldots,d_{0r})^t\\
~\\
\big(d_{ij}\big)_{i,j=1}^r:=(\vec{\theta}_1,\ldots,\vec{\theta}_r).\\
\end{array} \right.\nonumber
\end{eqnarray}

Then
\be\label{7}\begin{split}
&\det\Big(\big(d_{0i}d_{0j}-d_{00}d_{ij}\big)_{i,j=1}^r\Big)\\
=&\det\big(d_{01}\vec{\theta}-d_{00}\vec{\theta}_1,
d_{02}\vec{\theta}-d_{00}\vec{\theta}_2,\ldots,
d_{0r}\vec{\theta}-d_{00}\vec{\theta}_r\big)\\
=&(-1)^r\cdot\det\big(d_{00}\vec{\theta}_1-d_{01}\vec{\theta},
d_{00}\vec{\theta}_2-d_{02}\vec{\theta},\ldots,
d_{00}\vec{\theta}_r-d_{0r}\vec{\theta}\big)\\
=&(-1)^r\cdot d_{00}^{r-1}\cdot\Big[d_{00}\cdot\det(\vec{\theta}_1,\ldots,\vec{\theta}_r)
-\sum_{i=1}^r\big[d_{0i}\cdot\det(\vec{\theta}_1,\ldots,
\vec{\theta}_{i-1},\vec{\theta},\vec{\theta}_{i+1},\ldots,\vec{\theta}_r)\big]\Big]\\
=&(-1)^r\cdot d_{00}^{r-1}\cdot\Big[d_{00}\cdot\det(\vec{\theta}_1,\ldots,\vec{\theta}_r)
+\sum_{i=1}^r\big[d_{0i}\cdot(-1)^i\cdot\det(\vec{\theta},\vec{\theta}_1,\ldots,
\vec{\theta}_{i-1},\vec{\theta}_{i+1},\ldots,\vec{\theta}_r)\big]\Big]\\
=&(-1)^r\cdot d_{00}^{r-1}\cdot\det\Big(\big(d_{ij}\big)_{i,j=0}^r\Big).
\end{split}\ee

The reason for the last equality in (\ref{7}) is due to the fact that the expression inside $[\cdots]$ in the last but one line in (\ref{7}),
under the assumptions $d_{ij}=d_{ji}$, is nothing but the expansion of $\det\Big(\big(d_{ij}\big)_{i,j=0}^r\Big)$ along its first line $(d_{00},d_{01},\cdots,d_{0r})$. This completes the proof of Theorem \ref{determinantal}.

\subsection{Proofs of Corollaries \ref{coro1} and \ref{coro2}}
Corollaries \ref{coro1} and \ref{coro2} are essentially direct consequences of Theorem \ref{determinantal}. For the reader's convenience we still indicate that how they can be derived from Theorem \ref{determinantal}.

Note that the set of positive semi-definite complex Hermitian $n\times n$ matrices can be viewed as a closed cone, denoted it by $\mathcal{M}$, in $\mathbb{R}^{n^2}$. Then, under the assumptions of Corollary \ref{coro1}, the function $f$ can be defined by $$f(A,B):=D(A,B,B_3,\ldots,B_n):~
\mathcal{M}\times\mathcal{M}\longrightarrow\mathbb{R}_{\geq0}.$$
The reason that this $f$ is indeed nonnegative is well-known (\cite[Lemma 2]{Ba}) and moreover $f$ is strictly positive if these $A,B,B_3,\ldots,B_n$ are all positive definite (\cite[Theorem 9]{Ba}).

Now applying Theorem \ref{determinantal} to this situation yields the first inequality in (\ref{determinantal general mixed}): $$\det\Big(\big(d_{0i}d_{0j}-d_{00}d_{ij}\big)_{i,j=1}^r\Big)\geq 0.$$

If those $A_0,A_1,\ldots,A_r,B_3,\ldots,B_3$ in Corollary \ref{coro1} are positive definite, then $d_{00}>0$ (\cite[Theorem 9]{Ba}). This, together with the first inequality in (\ref{determinantal general mixed}), tells us that the second one in (\ref{determinantal general mixed}) is true if the matrices involved are positive definite. However, this is enough to derive the desired result as positive semi-definite matrices can be approximated by positive definite ones.

The proof of Corollary \ref{coro2} is identically the same as that of Corollary \ref{coro1}: apply Theorem \ref{determinantal} to the closed nef cone $\overline{\mathcal{K}}\subset H^{1,1}(M;\mathbb{R})$ to yield the first one in (\ref{determinantal general nef}). Note that $c_1\cdot c_2\cdot c_3\cdots c_n>0$ if these $c_i$ are all K\"{a}hler classes,and then the second one in (\ref{determinantal general nef}) holds for K\"{a}hler classes. Then the general case is also obtained by approximation.

\section{Proof of Theorem \ref{equality case1}}\label{Proof of equality}
In this section we shall prove Theorems \ref{equality case1}. The main ingredients in it are the case of $r=2$ in Corollary \ref{coro2}, and a result due to Dinh-Nguy\^{e}n in \cite{DN}, which let us first recall in what follows.

The main contributions in \cite{DN} are to extend the usual Hodge-Riemann bilinear theorem, the Hard Lefschetz theorem and the Lefschetz decomposition theorem to the mixed version by replacing a single K\"{a}hler class with possibly distinct K\"{a}hler classes (\cite[Theorems A-C]{DN}, \cite{DN2}). In addition to these, they also give some information on the cone of smooth strictly positive classes in $H^{n-2,n-2}(M,\mathbb{R})$ satisfying the Hodge-Riemann bilinear theorem (\cite[Prop. 4.1]{DN}). It is this result that play a key role in our proof of Theorem \ref{equality case1}.

Let $\omega$ be a K\"{a}hler class. For each $\Omega\in H^{n-2,n-2}(M;\mathbb{R})$, define
\be\label{primitivedef}P^{1,1}(M;\Omega,\omega):=\big\{\alpha\in
H^{1,1}(M,\mathbb{C})~\big|~ \alpha\wedge\Omega\wedge\omega=0\big\}\ee
and a bilinear form
$Q_{\Omega}(\cdot,\cdot)$ with respect to $\Omega$ on
$H^{1,1}(M,\mathbb{C})$ by
\be\label{quadric}Q_{\Omega}(\alpha,\beta)
:=-\int_M\alpha\wedge\bar{\beta}\wedge\Omega,
\qquad \alpha,~\beta\in H^{1,1}(M,\mathbb{C}).\ee

The usual Hodge-Riemann bilinear theorem tells us that  $Q_{\omega^{n-2}}(\cdot,\cdot)$ is positive-definite on $P^{1,1}(M;\omega^{n-2},\omega)$ and the mixed version due to Dinh-Nguy\^{e}n says that it remains true if $\Omega$ is the product of arbitrary $n-2$ K\"{a}hler classes $\omega_1\wedge\cdots\wedge\omega_{n-2}$: $Q_{\omega_1\wedge\cdots\wedge\omega_{n-2}}(\cdot,\cdot)$ is positive-definite on $P^{1,1}(M;\omega_1\wedge\cdots\wedge\omega_{n-2},\omega)$ (\cite[Theorem A]{DN}).

Following \cite{DN}, we define the cone
\be
\begin{split}
\mathcal{K}_{n-2}^{\text{HR}}(\omega):=\big\{&\Omega\in H^{n-2,n-2}(M,\mathbb{R}):~\text{classes of smooth strictly positive}\\
 &\text{$(n-2,n-2)$-forms such that $Q_{\Omega}(\cdot,\cdot)$ are positive-definite on $P^{1,1}(M;\Omega,\omega)$}\big\}
\end{split}\nonumber\ee
and $\overline{\mathcal{K}}_{n-2}^{\text{HR}}(\omega)$ its closure.
Here positivity of forms of higher bidegrees can be understood in the weak or strong sense, as stated in \cite[p. 847]{DN}. In any case by the mixed version of the Hodge-Riemann bilinear theorem in \cite{DN} we have
$$\big\{\omega_1\wedge\cdots\wedge\omega_{n-2}~\big|~\text{
$\omega_i$ are K\"{a}hler classes}\big\}\subset \mathcal{K}_{n-2}^{\text{HR}}(\omega)$$
and thus
\be\label{nef HR}
\big\{\gamma_1\wedge\cdots\wedge\gamma_{n-2}~\big|~\text{
$\gamma_i$ are nef classes}\big\}\subset \overline{\mathcal{K}}_{n-2}^{\text{HR}}(\omega)
.\ee

With the above notation understood, we have the following result (\cite[Prop. 4.1]{DN}), only whose second part shall be used in our proof.
\begin{proposition}[Dinh-Nguy\^{e}n]\label{DN}
\begin{enumerate}
\item
$\mathcal{K}_{n-2}^{\text{HR}}(\omega)$ \big(and hence $\overline{\mathcal{K}}_{n-2}^{\text{HR}}(\omega)$\big) does not depend on the K\"{a}hler class $\omega$ and thus can be simply denoted by $\mathcal{K}_{n-2}^{\text{HR}}$ and $\overline{\mathcal{K}}_{n-2}^{\text{HR}}$.

\item
Let $\Omega\in\overline{\mathcal{K}}_{n-2}^{\text{HR}}$ and $\omega$ be any K\"{a}hler class. Then $Q_{\Omega}(\cdot,\cdot)$ is positive semi-definite on $P^{1,1}(M;\Omega,\omega)$ and, for $\alpha\in P^{1,1}(M;\Omega,\omega)$ we have $Q_{\Omega}(\alpha,\alpha)=0$ if and only if $\alpha\wedge\Omega=0$.
\end{enumerate}
\end{proposition}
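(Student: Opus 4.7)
Let $\Omega := \gamma_3\wedge\cdots\wedge\gamma_n$. The implication (2)$\Rightarrow$(1) is a one-line computation: the proportionality $\gamma_1\wedge\Omega = \lambda\,\gamma_2\wedge\Omega$ in $H^{n-1,n-1}(M;\mathbb{R})$ lets us pair with either $\gamma_1$ or $\gamma_2$ to obtain $\gamma_1^2\cdot\Omega = \lambda(\gamma_1\gamma_2\cdot\Omega)$ and $\gamma_1\gamma_2\cdot\Omega = \lambda(\gamma_2^2\cdot\Omega)$; multiplying these yields (\ref{equality 2}) at once. All the substance is therefore in the direction (1)$\Rightarrow$(2), for which my plan combines the two ingredients flagged at the start of Section \ref{Proof of equality}: the $r=2$ case of Corollary \ref{coro2} and Proposition \ref{DN}(2).

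Assume (\ref{equality 2}) holds. Since all $\gamma_i$ are nef and big, the intersection numbers $\gamma_1^2\cdot\Omega$, $\gamma_1\gamma_2\cdot\Omega$, $\gamma_2^2\cdot\Omega$ are all strictly positive (the standard positivity of top intersections of nef-and-big classes, which I will simply invoke). Set $t := (\gamma_1\gamma_2\cdot\Omega)/(\gamma_2^2\cdot\Omega) > 0$; by (\ref{equality 2}) this also equals $(\gamma_1^2\cdot\Omega)/(\gamma_1\gamma_2\cdot\Omega)$, so the class $\alpha := \gamma_1 - t\gamma_2$ satisfies
\[
\alpha^2\cdot\Omega \;=\; \gamma_1^2\cdot\Omega - 2t(\gamma_1\gamma_2\cdot\Omega) + t^2(\gamma_2^2\cdot\Omega) \;=\; 0,
\]
equivalently $Q_\Omega(\alpha,\alpha) = 0$ in the notation of (\ref{quadric}). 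This is the ``square-zero'' input for Proposition \ref{DN}(2), but to apply that result I still need to exhibit $\alpha$ as a primitive class with respect to some Kähler class.

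For the primitivity I turn to the $r=2$ case of Corollary \ref{coro2}. Fix any Kähler class $\omega$, and apply that corollary to the nef classes $\beta_0 := \gamma_1$, $\beta_1 := \gamma_2$, $\beta_2 := \omega$ together with $\gamma_3,\ldots,\gamma_n$. Writing $d_{ij}$ for the intersection numbers as in (\ref{dij}), Corollary \ref{coro2} asserts that the $2\times 2$ matrix $\bigl(d_{0i}d_{0j} - d_{00}d_{ij}\bigr)_{i,j=1}^{2}$ is positive semidefinite. By hypothesis (\ref{equality 2}) its $(1,1)$-entry $d_{01}^2 - d_{00}d_{11}$ vanishes; a standard linear-algebra fact (a positive semidefinite matrix with a zero diagonal entry has the whole row and column through that entry equal to zero) then forces the off-diagonal entry to vanish as well, giving $d_{01}d_{02} = d_{00}d_{12}$. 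Dividing by $d_{01}>0$ and using $d_{00}/d_{01} = t$, this reads $\gamma_1\cdot\omega\cdot\Omega = t\,\gamma_2\cdot\omega\cdot\Omega$, i.e., $\alpha\cdot\omega\cdot\Omega = 0$, so $\alpha \in P^{1,1}(M;\Omega,\omega)$.

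The conclusion is then immediate from Dinh--Nguyễn: by (\ref{nef HR}) we have $\Omega \in \overline{\mathcal{K}}^{\text{HR}}_{n-2}$, so Proposition \ref{DN}(2) applies, and the vanishing of $Q_\Omega(\alpha,\alpha)$ for this primitive $\alpha$ forces $\alpha\wedge\Omega = 0$ in $H^{n-1,n-1}(M;\mathbb{R})$, i.e., $\gamma_1\wedge\Omega = t\,\gamma_2\wedge\Omega$, which is precisely (2). The main obstacle is the primitivity step: it requires that (\ref{equality 2}) degenerates the determinantal PSD matrix in exactly the right way, and strict positivity of the pivots $d_{00}$ and $d_{01}$ is needed to extract a \emph{single} proportionality constant $t$ that works uniformly for every Kähler $\omega$. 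This is where the nef-and-big hypothesis enters essentially; once primitivity is in hand, the mixed Hodge--Riemann bilinear relation closes the argument cleanly.
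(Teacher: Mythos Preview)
Your argument does not prove the stated Proposition~\ref{DN}. What you have written is a proof of Theorem~\ref{equality case1} (the equality characterization for nef and big classes), which \emph{uses} Proposition~\ref{DN}(2) as a black box in its final step. Proposition~\ref{DN} itself is a result of Dinh--Nguy\^{e}n quoted from \cite[Prop.~4.1]{DN}; the paper does not supply a proof for it, and neither do you---you simply invoke it.

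If the intended target was in fact Theorem~\ref{equality case1}, then your proof is correct and follows the same route as the paper's Section~\ref{Proof of equality}: apply the $r=2$ determinantal inequality (Proposition~\ref{coro3}) with $\beta_0=\gamma_1$, $\beta_1=\gamma_2$, $\beta_2=\omega$, use the hypothesis $d_{01}^2=d_{00}d_{11}$ to force $d_{01}d_{02}=d_{00}d_{12}$, deduce primitivity of $\gamma_1-t\gamma_2$ (the paper uses $\gamma_2-\lambda\gamma_1$, a harmless relabeling), compute $Q_\Omega(\alpha,\alpha)=0$, and close with Proposition~\ref{DN}(2). The only cosmetic difference is that you extract the vanishing off-diagonal entry via the linear-algebra fact ``PSD matrix with a zero diagonal entry has zero row/column,'' whereas the paper reads it directly off the inequality (\ref{r=2}); and you skip the paper's Lemma~\ref{lemma}, which is harmless since for top-degree classes the vanishing of the intersection number is already the vanishing in cohomology.
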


Besides this proposition, the $r=2$ case of our Corollary \ref{coro2} is another main ingredient in the proof of Theorem \ref{equality case1}, and so we rephrase it in the following proposition for our later reference.

\begin{proposition}\label{coro3}
Assume that $M$ is a compact K\"{a}hler manifold of complex dimension $n$. Arbitrarily choose $n+1$ nef classes $\beta_0,\beta_1,\beta_2,\gamma_3,\ldots,\gamma_n$ in $H^{1,1}(M;\mathbb{R})$. Denote by \be\label{dij}d_{ij}=\beta_i\cdot\beta_j\cdot\gamma_3\cdots\gamma_n,\qquad (0\leq i,j\leq 2).\ee
 Then we have
\be\label{r=2}(d_{01}^2-d_{00}d_{11})(d_{02}^2-d_{00}d_{22})
\geq(d_{01}d_{02}-d_{00}d_{12})^2.\ee
\end{proposition}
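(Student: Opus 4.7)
The plan is to observe that Proposition \ref{coro3} is simply a rephrasing of the $r=2$ case of Corollary \ref{coro2}, specialized to the three distinguished nef classes $\beta_0, \beta_1, \beta_2$ with the remaining $n-2$ nef classes $\gamma_3, \ldots, \gamma_n$ absorbed into a fixed background. Concretely, I would apply Theorem \ref{determinantal} with the closed cone chosen to be the nef cone $\overline{\mathcal{K}} \subset H^{1,1}(M;\mathbb{R})$ and the bilinear function
$$f(\beta, \beta') := \beta \cdot \beta' \cdot \gamma_3 \cdots \gamma_n.$$
This $f$ is trivially symmetric and bilinear in the sense of (\ref{determinallinear}), since the intersection product is $\mathbb{R}$-bilinear on all of $H^{1,1}(M;\mathbb{R})$. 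The nontrivial hypothesis to check is the AF type relation (\ref{determinantalAFtype}), but for any pair $u, v \in \overline{\mathcal{K}}$ this is \emph{exactly} the content of inequality (\ref{AF nef class}) in Proposition \ref{AF kahler/nef}.

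Having verified the hypotheses, the $r=2$ conclusion of Theorem \ref{determinantal} (setting $u_0 = \beta_0$, $u_1 = \beta_1$, $u_2 = \beta_2$) yields that the $2\times 2$ symmetric matrix
$$\begin{pmatrix} d_{01}^2 - d_{00}d_{11} & d_{01}d_{02} - d_{00}d_{12} \\ d_{01}d_{02} - d_{00}d_{12} & d_{02}^2 - d_{00}d_{22} \end{pmatrix}$$
is positive semi-definite. In particular its determinant is nonnegative, and the resulting inequality
$$(d_{01}^2 - d_{00}d_{11})(d_{02}^2 - d_{00}d_{22}) - (d_{01}d_{02} - d_{00}d_{12})^2 \geq 0$$
is precisely (\ref{r=2}).

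There is no genuine obstacle, since all the real work has already been done: the $t$-expansion trick of Shephard in the proof of Theorem \ref{determinantal} performs the combinatorial manipulation, while Proposition \ref{AF kahler/nef} supplies the required AF inequality for nef classes. The only thing I might want to emphasize when writing the final proof is that the individual diagonal entries $d_{0i}^2 - d_{00}d_{ii}$ are themselves nonnegative by a direct application of (\ref{AF nef class}); this is consistent with (but weaker than) positive semi-definiteness, and confirms that the sign conventions in (\ref{r=2}) are correct.
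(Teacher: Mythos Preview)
Your proposal is correct and follows essentially the same approach as the paper: Proposition \ref{coro3} is explicitly introduced there as a rephrasing of the $r=2$ case of Corollary \ref{coro2}, which in turn is obtained by applying Theorem \ref{determinantal} to the nef cone with exactly the bilinear function $f(\beta,\beta')=\beta\cdot\beta'\cdot\gamma_3\cdots\gamma_n$ that you use. Your unpacking of the argument, including the verification of (\ref{determinantalAFtype}) via (\ref{AF nef class}) and the identification of (\ref{r=2}) with the nonnegativity of the $2\times 2$ determinant, matches the paper's reasoning precisely.
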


Before proceeding to prove Theorem \ref{equality case1}, we still need the following lemma, which is a well-known fact.
\begin{lemma}\label{lemma}
Let $T\in H^{n-1,n-1}(M;\mathbb{R})$ be an arbitrary $(n-1,n-1)$ cohomology class and $\omega$ any K\"{a}hler class. If $\int_MT\wedge\omega=0$, then $T\wedge\omega=0$.
\end{lemma}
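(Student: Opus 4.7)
The plan is to exploit the fact that $T \wedge \omega$ lies in the top-degree cohomology $H^{n,n}(M;\mathbb{R}) = H^{2n}(M;\mathbb{R})$, which is one-dimensional. Indeed, since $M$ is a compact connected K\"ahler manifold of complex dimension $n$, the Hodge decomposition in total degree $2n$ reduces to the single summand with $p=q=n$: the bidegree constraints $0 \le p,q \le n$ together with $p+q = 2n$ force $p=q=n$, so $H^{n,n}(M;\mathbb{R}) = H^{2n}(M;\mathbb{R})$. Combining this with $H^{2n}(M;\mathbb{R}) \cong \mathbb{R}$, which comes from connectedness and compactness (with the canonical orientation furnished by the K\"ahler structure), one concludes that the integration map $\int_M \colon H^{n,n}(M;\mathbb{R}) \to \mathbb{R}$ is an isomorphism; a positive generator is provided for instance by $\omega^n$, whose integral is strictly positive.

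Granting this, the hypothesis $\int_M T \wedge \omega = 0$ says precisely that the cohomology class $T \wedge \omega \in H^{n,n}(M;\mathbb{R})$ is sent to $0$ under the above isomorphism, hence $T \wedge \omega = 0$ in $H^{n,n}(M;\mathbb{R})$. There is really no obstacle here; the lemma is a convenient cohomological reformulation of the one-dimensionality of top cohomology. I note in passing that the K\"ahler assumption on $\omega$ is not genuinely used in this argument: the statement remains valid when $\omega$ is replaced by any class in $H^{1,1}(M;\mathbb{R})$ (indeed any degree-$2$ class), by exactly the same one-line justification. The role of the lemma in the proof of Theorem \ref{equality case1} is to allow one to translate an \emph{integral} equality of the form $\int_M T \wedge \omega = 0$ into the stronger \emph{cohomological} identity $T \wedge \omega = 0$, which can then be fed into Proposition \ref{DN} through the vanishing criterion $\alpha \wedge \Omega = 0$ for classes $\alpha$ primitive with respect to $\Omega$ and $\omega$.
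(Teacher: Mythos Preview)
Your proof is correct and in fact more elementary than the paper's. The paper establishes the lemma via the Lefschetz decomposition of $T$ with respect to $\omega$: one writes $T = (\lambda\omega + \alpha_{\mathrm{p}})\wedge\omega^{n-2}$ with $\alpha_{\mathrm{p}}$ primitive, and the hypothesis $\int_M T\wedge\omega = 0$ forces $\lambda = 0$, whence $T\wedge\omega = \alpha_{\mathrm{p}}\wedge\omega^{n-1} = 0$. You sidestep this machinery entirely by observing that $T\wedge\omega$ already lives in the one-dimensional space $H^{n,n}(M;\mathbb{R}) = H^{2n}(M;\mathbb{R}) \cong \mathbb{R}$, so vanishing of its integral is equivalent to vanishing of the class. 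Your remark that the K\"{a}hler hypothesis on $\omega$ is not needed is well taken: the paper's argument genuinely requires it in order to invoke the Lefschetz decomposition, whereas yours uses only compactness, connectedness, and orientability. The paper's route does produce the finer structural statement $T = \alpha_{\mathrm{p}}\wedge\omega^{n-2}$, but that extra information is not used in the proof of Theorem~\ref{equality case1}, so nothing is lost by your shortcut.
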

\begin{proof}
Applying the usual Lefschetz decomposition theorem to $T$ with respect to the K\"{a}hler class $\omega$ yields $$T=(\lambda\omega+\alpha_{\text{p}})\wedge\omega^{n-2},$$ where $\lambda\in\mathbb{R}$ and $\alpha_{\text{p}}\in H^{1,1}(M;\mathbb{R})$ is a \emph{primitive} element with respect to $\omega$, i.e., $\alpha_{\text{p}}\wedge\omega^{n-1}=0$. Then the condition $\int_MT\wedge\omega=0$ tells us that $\lambda=0$ and hence $$T\wedge\omega=\alpha_{\text{p}}\wedge\omega^{n-1}=0.$$
\end{proof}

Now we are ready to prove Theorem \ref{equality case1}.

\begin{proof}
It suffices to show the implication $(1)\Rightarrow(2)$ in Theorem \ref{equality case1} as the implication $(2)\Rightarrow(1)$ is obvious.

Recall that a class $\gamma\in H^{1,1}(M;\mathbb{R})$ is called \emph{big} if there exists a K\"{a}hler current in it. It turns out that a nef class $\gamma$ is big if and only if its self-intersection number $\gamma^n>0$ (\cite[Theorem 0.5]{DP1}). If $\gamma_1,\gamma_2,\ldots,\gamma_n$ are nef and big classes then the intersection number $\gamma_1\cdot\gamma_2\cdots\gamma_n>0$.

Assume now that the equality (\ref{equality 2}) holds. Choose a K\"{a}hler class $\omega$ and apply the inequality (\ref{r=2}) in Proposition \ref{coro3} by setting $\beta_0=\gamma_1,$ $\beta_1=\gamma_2,$ $\beta_2=\omega$ and the $d_{ij}$ ($0\leq i,j\leq 2$) as in (\ref{dij}). The equality (\ref{equality 2}) under this notation reads $d_{01}^2=d_{00}d_{11}$, which, together with the inequality (\ref{r=2}), implies that $d_{01}d_{02}=d_{00}d_{12}$. This means that
\be\label{8}(\gamma_1\cdot\gamma_2\cdot\gamma_3\cdots\gamma_n)
(\gamma_1\cdot\omega\cdot\gamma_3\cdots\gamma_n)
=(\gamma_1^2\cdot\gamma_3\cdots\gamma_n)(\gamma_2\cdot\omega\cdot\gamma_3\cdots\gamma_n).\ee

Note that the four intersection numbers in (\ref{8}) are all positive as the classes involved are either nef and big or K\"{a}hler. Set
\be\label{8.5}\lambda:=\frac{\gamma_1\cdot\gamma_2\cdot\gamma_3\cdots\gamma_n}
{\gamma_1^2\cdot\gamma_3\cdots\gamma_n}>0\ee
and (\ref{8}) then becomes
\be\label{9}(\gamma_2-\lambda\gamma_1)\cdot\gamma_3\cdots
\gamma_n\cdot\omega=0.\ee

Combining (\ref{9}) with Lemma \ref{lemma} leads to $$(\gamma_2-\lambda\gamma_1)\wedge\gamma_3\wedge\cdots
\wedge\gamma_n\wedge\omega=0,$$
which implies, under the notion (\ref{primitivedef}), that
\be\label{10}(\gamma_2-\lambda\gamma_1)\in P^{1,1}(M;\gamma_3\wedge\cdots
\wedge\gamma_n,\omega).\ee

Now
\be\label{12}\begin{split}
&Q_{\gamma_3\wedge\cdots
\wedge\gamma_n}(\gamma_2-\lambda\gamma_1,\gamma_2-\lambda\gamma_1)\\
=&-(\gamma_2-\lambda\gamma_1)^2\cdot\gamma_3\cdots
\gamma_n\\
=&-(\gamma_2^2\cdot\gamma_3\cdots
\gamma_n)+2\lambda(\gamma_1\cdot\gamma_2\cdot\gamma_3\cdots
\gamma_n)-\lambda^2(\gamma_1^2\cdot\gamma_3\cdots
\gamma_n)\\
=&\frac{(\gamma_1\cdot\gamma_2\cdot\gamma_3\cdots
\gamma_n)^2-(\gamma_1^2\cdot\gamma_3\cdots
\gamma_n)(\gamma_2^2\cdot\gamma_3\cdots
\gamma_n)}{\gamma_1^2\cdot\gamma_3\cdots
\gamma_n}\qquad\big(\text{by (\ref{8.5})}\big)\\
=&0.\qquad\big(\text{by the assumption condition (\ref{equality 2})}\big)
\end{split}\ee

Note that (\ref{nef HR}) tells us that \be\label{11}\gamma_3\wedge\cdots
\wedge\gamma_n\in\overline{\mathcal{K}}_{n-2}^{\text{HR}}.\ee

Applying Proposition \ref{DN} under the conditions (\ref{10}), (\ref{12}) and (\ref{11}) yields the desired conclusion:
$$(\gamma_2-\lambda\gamma_1)\wedge\gamma_3\wedge\cdots\wedge\gamma_n=0,$$
i.e., the two $(n-1,n-1)$ real cohomology classes $\gamma_1\wedge\gamma_3\wedge\cdots\wedge\gamma_n$ and $\gamma_2\wedge\gamma_3\wedge\cdots\wedge\gamma_n$ are proportional in $H^{n-1,n-1}(M;\mathbb{R})$.
\end{proof}

\section{Appendix}\label{last section}
The ideas of the proofs of (\ref{AF mixed volume for m}), (\ref{AF kahler class for m}), (\ref{AF complex mixed formula for m}) and Theorem \ref{equality case2} are all via induction arguments and it should be a classical method and well-known to related experts. For the reader's convenience we shall give in this Appendix a proof of (\ref{AF kahler class for m}) and Theorem \ref{equality case2} under the assumptions of Proposition \ref{AF kahler/nef} and Theorem \ref{equality case1}, from which we can also see how to derive (\ref{AF mixed volume for m}) and (\ref{AF complex mixed formula for m}) from (\ref{AF mixed volume}) and (\ref{AF complex mixed discriminant inequality}) respectively.

\begin{theorem}\label{appendix}
Let $\gamma_1,\gamma_2,\ldots,\gamma_n$ be $n$ nef classes on a compact K\"{a}hler manifold $M$ of complex dimension $n$ and $2\leq m\leq n$. Then we have
\begin{enumerate}
\item
\be\label{inequality m}
(\gamma_1\cdots\gamma_m\cdot\gamma_{m+1}\cdots\gamma_n)^m\geq\prod_{i=1}^m
(\gamma_i^m\cdot\gamma_{m+1}\cdots\gamma_n);\ee

\item
if these $\gamma_1,\ldots,\gamma_n$ are K\"{a}her classes then the equality case in $(\ref{inequality m})$ holds if and only if $\gamma_1,\ldots,\gamma_m$ are proportional in $H^{1,1}(M;\mathbb{R})$;

\item
if these $\gamma_1,\ldots,\gamma_n$ are nef and big classes then the equality case in $(\ref{inequality m})$ holds if and only if
the $(n-1,n-1)$ real cohomology classes $$\gamma_{i_1}\wedge\gamma_{i_2}\wedge\cdots\wedge
\gamma_{i_{m-1}}\wedge\gamma_{m+1}\wedge\cdots\wedge\gamma_n\qquad(1\leq i_1,\ldots,i_{m-1}\leq m)$$
 are all proportional in $H^{n-1,n-1}(M;\mathbb{R})$.
\end{enumerate}
\end{theorem}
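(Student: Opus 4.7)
The plan is to prove Theorem \ref{appendix} by induction on $m$. The base case $m=2$ is exactly Proposition \ref{AF kahler/nef} for parts (1)--(2) and Theorem \ref{equality case1} for part (3). For the inductive step I set $\Phi := \gamma_{m+1}\cdots\gamma_n$, packaging the last $n-m$ nef factors together.

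For the inequality, I apply the induction hypothesis to the nef classes $\gamma_2,\ldots,\gamma_m$ with $\gamma_1\Phi$ playing the role of the $(n-m+1,n-m+1)$ cofactor (it is a product of nef classes), obtaining
\[
(\gamma_1\gamma_2\cdots\gamma_m\Phi)^{m-1}\geq\prod_{i=2}^{m}(\gamma_1\gamma_i^{m-1}\Phi).\qquad(\ast)
\]
For each $i\in\{2,\ldots,m\}$ I then invoke the Khovanskii--Teissier log-concavity of $a\mapsto\log(\gamma_1^a\gamma_i^{m-a}\Phi)$ on $\{0,1,\ldots,m\}$, obtained by iterating (\ref{AF nef class}) with $\alpha=\gamma_1$, $c=\gamma_i$ and appropriate nef cofactors; the chord bound at the interior point $a=1$ reads
\[
\gamma_1\gamma_i^{m-1}\Phi\geq(\gamma_1^m\Phi)^{1/m}(\gamma_i^m\Phi)^{(m-1)/m}.\qquad(\ast\ast)
\]
Multiplying $(\ast\ast)$ over $i=2,\ldots,m$, substituting into $(\ast)$, and raising to the power $m/(m-1)$ delivers $(\ref{inequality m})$.

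For the equality characterizations, equality in $(\ref{inequality m})$ forces equality in $(\ast)$ and in each $(\ast\ast)$. A standard discrete-concavity lemma (a concave function on $\{0,\ldots,m\}$ that agrees with its chord at a single interior point must be affine) promotes equality in $(\ast\ast)$ to equality in every two-class AF relation
\[
(\gamma_1^a\gamma_i^{m-a}\Phi)^2=(\gamma_1^{a+1}\gamma_i^{m-a-1}\Phi)(\gamma_1^{a-1}\gamma_i^{m-a+1}\Phi),\qquad 1\leq a\leq m-1.
\]
In the K\"ahler case, Proposition \ref{AF kahler/nef} forces $\gamma_1\parallel\gamma_i$ for every $i\in\{2,\ldots,m\}$, which combined with the inductive equality case applied to $(\ast)$ yields (2). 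In the nef-and-big case, Theorem \ref{equality case1} applies to each such equality (the cofactor classes are among $\gamma_1,\ldots,\gamma_n$, hence nef and big) and produces proportionalities $\gamma_1^a\gamma_i^{m-a-1}\Phi\parallel\gamma_1^{a-1}\gamma_i^{m-a}\Phi$ in $H^{n-1,n-1}(M;\mathbb{R})$; simultaneously the inductive part (3) applied to $(\ast)$ gives proportionality of all $\gamma_1\gamma_{j_1}\cdots\gamma_{j_{m-2}}\Phi$ with $j_k\in\{2,\ldots,m\}$. Re-running the same chain with each $\gamma_j$ factored out in place of $\gamma_1$ (equality in $(\ref{inequality m})$ holds along every such route, since each route derives the same inequality) produces the analogous proportionalities for every distinguished index $j$; transitivity of proportionality then assembles the full symmetric list in (3).

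The main obstacle is the bookkeeping for (3): one must verify that the families of proportionalities arising from the different choices of distinguished $\gamma_j$ combine transitively to cover every multi-indexed wedge $\gamma_{i_1}\wedge\cdots\wedge\gamma_{i_{m-1}}\wedge\Phi$. Since every multiset of size $m-1$ drawn from $\{1,\ldots,m\}$ contains at least one index $j$ and therefore belongs to the $j$-th family, and any two such families share members (namely, multisets containing both distinguished indices), the transitive closure of these proportionalities exhausts the list claimed in (3).
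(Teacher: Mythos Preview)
Your derivation of the inequality in part (1) is correct and proceeds along a somewhat different route from the paper. The paper applies the level-$(m-1)$ hypothesis with \emph{each} $\gamma_j$ placed into the cofactor, multiplies the resulting $m$ inequalities to obtain $(\gamma_1\cdots\gamma_m\Phi)^{m(m-1)}\ge T_m:=\prod_{i\ne j}(\gamma_i^{m-1}\gamma_j\Phi)$, and then invokes the hypothesis again in the shape $(\gamma_i^{m-1}\gamma_j\Phi)^{m-1}\ge(\gamma_i^m\Phi)^{m-2}(\gamma_j^{m-1}\gamma_i\Phi)$ to bound $T_m$ from below. Your asymmetric route---one application of $(\ast)$ with $\gamma_1$ distinguished, followed by the two-class Khovanskii--Teissier chord bound $(\ast\ast)$---is a valid alternative and arguably tidier, avoiding the auxiliary quantity $T_m$. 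Part (2) goes through in either approach, since a single two-class K\"ahler equality already forces $\gamma_1\parallel\gamma_i$ in $H^{1,1}(M;\mathbb{R})$.

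There is, however, a genuine gap in your bookkeeping for part (3). The families you obtain for a distinguished index $j$ are of two kinds: (i) the wedges $\gamma_j\gamma_{k_1}\cdots\gamma_{k_{m-2}}\Phi$ with every $k_l\ne j$, coming from the inductive equality in $(\ast)$; and (ii) the two-index chains $\gamma_j^{\,a}\gamma_i^{\,m-1-a}\Phi$ for each $i\ne j$, coming from $(\ast\ast)$. A multiset $I$ of size $m-1$ lies in a type-(i) family only when \emph{some} index has multiplicity exactly one in $I$, and in a type-(ii) family only when $I$ has support of size at most two. For $m\ge 7$ there are multisets meeting neither condition: with $m=7$ take $I=\{1,1,2,2,3,3\}$, so that $\gamma_1^{2}\gamma_2^{2}\gamma_3^{2}\Phi$ lies in no family of either type and your transitive closure cannot reach it. Your sentence ``every multiset \dots\ contains at least one index $j$ and therefore belongs to the $j$-th family'' implicitly conflates ``contains $j$'' with ``contains $j$ exactly once''. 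The paper's own treatment of this step is the single phrase ``easily deduce the expected equality case for $m$'', and the families its symmetric argument produces coincide with yours; so the difficulty is not peculiar to your route, but it does require an additional argument (for instance, first showing that equality in (\ref{inequality m}) forces every level-$2$ relation $(\gamma_a\gamma_b\gamma_J\Phi)^2=(\gamma_a^2\gamma_J\Phi)(\gamma_b^2\gamma_J\Phi)$ for arbitrary $a,b\in\{1,\dots,m\}$ and $J$ of size $m-2$ from $\{1,\dots,m\}$) that neither sketch supplies.
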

\begin{proof}
The case $m=2$ is known due to Proposition \ref{AF kahler/nef} and Theorem \ref{equality case1}.

Assume that the assertions in Theorem \ref{appendix} hold for $m-1$. We shall show that  they must hold for $m$. Without loss of generality, we further assume that all the intersection numbers under consideration in the sequel are positive, which hold if the classes $\gamma_i$ are nef and big or K\"{a}hler. Then (\ref{inequality m}) can be obtained by approximating the nef classes $\gamma_i$ by K\"{a}hler classes.

Under the assumption condition we have
\be\label{13}(\gamma_1\cdots\gamma_m\gamma_{m+1}\cdots\gamma_n)^{m-1}\geq
\prod_{\mbox{\tiny$\begin{array}{c}
1\leq i\leq m\\
i\neq j\end{array}$}}(\gamma_i^{m-1}\gamma_j
\gamma_{m+1}\cdots\gamma_n),\qquad\forall~1\leq j\leq m.\ee

On the one hand, taking the product for $1\leq j\leq m$ in (\ref{13}) yields
\be\label{14}
\begin{split}
(\gamma_1\cdots\gamma_m\gamma_{m+1}\cdots\gamma_n)^{m(m-1)}
&\geq
\prod_{j=1}^m\prod_{\mbox{\tiny$\begin{array}{c}
1\leq i\leq m\\
i\neq j\end{array}$}}(\gamma_i^{m-1}\gamma_j
\gamma_{m+1}\cdots\gamma_n)\\
&=\prod_{\mbox{\tiny$\begin{array}{c}
1\leq i,j\leq m\\
i\neq j\end{array}$}}(\gamma_i^{m-1}\gamma_j
\gamma_{m+1}\cdots\gamma_n)\\
&=:T_m.\end{split}\ee

On the other hand, we have
\be\label{15}\begin{split}
(T_m)^{m-1}&=\prod_{\mbox{\tiny$\begin{array}{c}
1\leq i,j\leq m\\
i\neq j\end{array}$}}(\gamma_i^{m-1}\gamma_j
\gamma_{m+1}\cdots\gamma_n)^{m-1}\\
&\geq\prod_{\mbox{\tiny$\begin{array}{c}
1\leq i,j\leq m\\
i\neq j\end{array}$}}\big[(\gamma_i^m\gamma_{m+1}\cdots\gamma_n)^{m-2}
(\gamma_j^{m-1}\gamma_i\gamma_{m+1}\cdots\gamma_n)\big]~(\text{apply the assumption})\\
&=T_m\cdot\big[\prod_{i=1}^m
(\gamma_i^m\cdot\gamma_{m+1}\cdots\gamma_n)\big]^{(m-1)(m-2)}
\end{split}
\ee
and so
\be\label{16}T_m\geq\big[\prod_{i=1}^m
(\gamma_i^m\cdot\gamma_{m+1}\cdots\gamma_n)\big]^{m-1}.\ee
Combining (\ref{14}) with (\ref{16}) leads to the desired inequality (\ref{inequality m}). If further the equality case of (\ref{inequality m}) holds, then all the inequalities in (\ref{13}), (\ref{14}) and (\ref{15}) are indeed equalities. Applying the assumption of the equality case for $m-1$ easily deduce the expected equality case for $m$.
\end{proof}

\end{document}